\numberwithin{equation}{section}
\newtheorem{theorem}{Theorem}[section]
\newtheorem{lemma}[theorem]{Lemma}
\newtheorem{proposition}{Proposition}[section]
\theoremstyle{definition}
\newtheorem{definition}[theorem]{Definition}
\newtheorem{remark}{Remark}[section]
\newcommand{\ep}{\varepsilon}
\newcommand\R{{\mathbb R}}
\newcommand{\fc}{\frac}
\numberwithin{equation}{section}
\begin{document}
\title{Global solutions for  $H^s$-critical nonlinear biharmonic \\ Schr\"{o}dinger equation}

\author{Xuan Liu, Ting Zhang\\
	\small{School of Mathematical Sciences, Zhejiang University,
		Hangzhou 310027, China}}
\date{}

\maketitle


\begin{abstract}
	We consider  the nonlinear biharmonic  Schr\"odinger equation $$i\partial_tu+(\Delta^2+\mu\Delta)u+f(u)=0,\qquad (\text{BNLS})$$
	in the critical Sobolev space $H^s(\R^N)$, where $N\ge1$, $\mu=0$ or $-1$, $0<s<\min\{\fc N2,8\}$ and $f(u)$ is a nonlinear function that behaves like $\lambda\left|u\right|^{\alpha}u$ with $\lambda\in\mathbb{C},\alpha=\frac{8}{N-2s}$. We prove the existence and uniqueness of the global solutions to (BNLS)  for the small initial data.
	\\ \textbf{Keywords:} Fourth-order Schr\"odinger equation; Local well-posedness; Continuous dependence.
\end{abstract}
\section{Introduction}
In this paper, we consider the following  nonlinear biharmonic  Schr\"odinger equation
\begin{equation}\label{NLS}
	\begin{cases}i\partial_tu+(\Delta^2 +\mu\Delta) u+f(u)=0,\\
		u(0,x)=\phi(x),\end{cases}
\end{equation}
where  $t\in\mathbb{R}$, $x\in\mathbb{R}^N$, $N\ge1$,   $\phi\in H^s(\R^N)$, $0<s<\min \left\{\frac{N}{2},8\right\}$,   $\mu=-1$ or $\mu=0$, $u:\mathbb{R}\times\mathbb{R}^N\rightarrow\mathbb{C}$ is a complex-valued function and $f(u)$ is a nonlinear function that behaves like $\lambda \left|u\right|^{\alpha}u$ with $\lambda\in\mathbb{C}$, $\alpha>0$.
Note that if $\mu=0$ and $f(u)=\lambda \left|u\right|^{\alpha}u$ with $\lambda\in\mathbb{C},\alpha>0$,  the equation (\ref{NLS})  is invariant under the scaling, $
u_k(t,x)=k^{\frac{4}\alpha}u(k^4t,kx), k>0.
$  This means if $u$ is a solution of (\ref{NLS}) with the  initial datum $\phi$,  so is $u_k$ with the initial datum $\phi_k=k^{\frac{4}\alpha}\phi(kx)$.  Computing the homogeneous Sobolev norm, we get
$$
\left\|\phi_k\right\|_{\dot{H}^s}=k^{s-\frac{N}{2}+\frac{4}{\alpha}}\left\|\phi\right\|_{\dot{H}^s}.
$$
Hence the scale-invariant Sobolev space is $\dot{H}^{s_c}(\R^N)$,
with the critical index $s_c=\frac N2-\frac{4}\alpha$.  If $s_c=s$ (equivalently $\alpha=\frac{8}{N-2s}$),  the Cauchy problem (\ref{NLS}) is known as $H^s$-critical; if in particular $s_c=2$  (equivalently $\alpha=\frac{8}{N-4}$),  it is called energy-critical or $H^2$-critical.

The nonlinear  biharmonic  Schr\"odinger equation (\ref{NLS}), also called the fourth-order Schr\"odinger equation, was  introduced by Karpman \cite{Karpman1}, and Karpman--Shagalov \cite{Karpman2} to take into
account the role of small fourth-order dispersion terms in the propagation of intense laser beams in a bulk medium with Kerr
nonlinearity. The biharmonic Schr\"odinger equation has attracted a lot of interest in the past decade.  The  sharp dispersive estimates for the fourth-order Schr\"odinger operator in (\ref{NLS}), namely for the linear group associated to $i\partial_t+\Delta^2+\mu\Delta$ was obtained in  Ben-Artzi, Koch, and Saut \cite{Ben}. In \cite{Pa}, Pausader  established the corresponding Strichartz' estimate for the biharmonic  Schr\"odinger equation (\ref{NLS}).  Since then, the local and global well-posedness  for (\ref{NLS}) have been widely studied in recent years. See \cite{Guo3,Dinh,H,HLW1,HLW2,xuan,Miao2,Pa,Pa2,Wang} and references therein.

We are interested in the global solutions to (\ref{NLS}) in the critical Sobolev space $H^s\left(\R^N\right)$. For $s=2$, Pausader \cite{Pa} established  the global well-posedness  for the defocusing energy-critical equation (\ref{NLS})  (i.e. $\mu=0$ or $-1, f(u)=\lambda \left|u\right|^\alpha u$ with $\lambda>0,\alpha=\frac{8}{N-4}$)   in a radially symmetrical
setting. The global well-posedness  for the defocusing energy-critical problem without the radial condition and the  focusing energy-critical equation (\ref{NLS})  (i.e. $\mu=0$ or $-1$, $f(u)=\lambda \left|u\right|^\alpha u$ with $\lambda<0$, $\alpha=\frac{8}{N-4}$) were discussed in  \cite{Miao, Miao2, Pa3, Pa2}.  For general $s$, Y. Wang \cite{Wang} established the small radial initial data global solutions to the biharmonic Schr\"odinger equation by using the improved Strichartz's estimate for spatial spherically symmetric function. He  proved the global existence of solution for the Cauchy problem (\ref{NLS}) when $N\ge2$, $-\frac{3N-2}{2N+1}<s<\frac{N}{2},\alpha=\frac{8}{N-2s},\mu=0,f(u)=\lambda \left|u\right|^\alpha u,\lambda=\pm1$, and $\phi\in H^s(\R^N)$ is a small radial function.

The goal of this paper is to  establish the time global solution for (\ref{NLS}) with the small initial data in the critical Sobolev space $H^s(\R^N)$, where $N\ge1$, $0<s<\min \left\{\frac{N}{2},8\right\}$.
Before stating our results, we  define  the class $\mathcal{C}(\alpha)$.
\begin{definition}
	Let $\alpha>0$, $f\in C^{[\alpha]+1}(\mathbb{C},\mathbb{C})$ in the real sense,  where $[\alpha]$ denotes the largest integer less than or equal to $\alpha$, and  $f^{(j)}(0)=0$ for all $j$ with $0\leq j\leq [\alpha]$. We say that $f$ belongs to   the  class $\mathcal{C}(\alpha)$,  if it satisfies  one of the following two conditions:\\
	(i) $\alpha\notin \mathbb{Z}$, $f^{([\alpha]+1)}(0)=0$, and there exists $C>0$ such that for any $z_1,z_2\in\mathbb{C}$
	$$
	\left|f^{([\alpha]+1)}(z_1)-f^{([\alpha]+1)}(z_2)\right|\le
	C \left(\left|z_1\right|^{\alpha-[\alpha]}+\left|z_2\right|^{\alpha-[\alpha]}\right)\left|z_1-z_2\right|,
	$$
	(ii) $\alpha\in\mathbb{Z}$,  and there exists $C>0$ such that for any $z\in\mathbb{C}$
	$$
	\left|f^{([\alpha]+1)}(z)\right|\le C.
	$$
\end{definition}
\begin{remark}
	We note that the power type nonlinearity $f(u)=\lambda \left|u\right|^\alpha u$ or  $f(u)=\lambda \left|u\right|^{\alpha+1}$ with $\lambda\in\R, \alpha>0$ is of the class $\mathcal{C}(\alpha)$, which has been widely studied in classical and biharmonic nonlinear Schr\"odinger equations. See \cite{Ca9,Ca10,Guo3,Dinh,HLW1,HLW2,xuan,Miao,Miao2,Pa,Pa3,Pa2} for instance.
\end{remark}
\begin{remark}
	For any $\alpha>0$ and  $f\in \mathcal{C}(\alpha)$, it is easy to check that there exists $C>0$ such that for any $u,v\in\mathbb{C}$, we have
	\begin{equation}\label{fu}
		\left|f(u)-f(v)\right|\le C \left(\left|u\right|^{\alpha}+\left|v\right|^{\alpha}\right)\left|u-v\right|,\qquad \left|\partial_tf(u)\right|\le C \left|u\right|^{\alpha}\left|\partial_tu\right|.
	\end{equation}
\end{remark}
Our main result is the following. For the definitions of vector-valued Besov spaces $B^{\theta }_{q,2}\left(\R,L^r\left(\R^N\right)\right)$ and $B^{\theta-\sigma/4}_{q,2}B^\sigma_{r,2}$, we refer to Section \ref{s2}.
\begin{theorem}\label{T1}
	Assume  $0<s<\min\{8,\fc N2\}$, $N\ge1$, $\mu=0$ or $-1$, $f\in\mathcal{C}(\alpha)$ and $\alpha=\fc8{N-2s}>\alpha(s)$, where
	$$
	\alpha(s)=\begin{cases}
		0,\qquad \qquad\qquad \qquad \text{if } 0<s<4,\\
		\max \left\{\frac{s}{4}-1,s-5\right\}, \ \text{if } 4<s<8.
	\end{cases}
	$$
	Given any $\phi\in  H^s(\R^N)$ with $\left\|\phi\right\|_{H^s}$ sufficiently small, there exists  a unique  global solution
	$ u\in C\left(\R,H^s\left(\R^N\right)\right)\cap \mathcal{X}$
	to the  Cauchy problem (\ref{NLS}),  where
	$$
	\mathcal{X}=\left\{\begin{array}{ll}
		L^{q_1}\left(\R,B^{s}_{r_1,2}\right)\cap B^{s/4}_{q_1,2}\left(\R,L^{r_1}\left(\R^N\right)\right), &0<s\le4,\\
		L^{q_2}\left(\R,H^{4,r_2}(\R^N)\right),&s=4,\\
		L^{q_3}\left(\R,B^{s}_{r_3,2}\left(\R^N\right)\right)\cap B^{s/4}_{q_3,2}\left(\R,L^{r_3}\left(\R^N\right)\right)\cap H^{1,q_3}\left(\R,B^{s-4}_{r_3,2}\left(\R^N\right)\right),& 4<s<6,\\
		B^{s/4}_{2,2}\left(\R,L^{r_4}\left(\R^N\right)\right)\cap B^{\left(s-2\right)/4}_{2,2}\left(\R,L^{r_4}\left(\R^N\right)\right),& 6\le s<8,
	\end{array}\right.$$
	with
	$$
	\begin{cases}
		q_1=\frac{\left(2N+8\right)\left(N-2s+8\right)}{\left(N-2s\right)\left(N+8\right)}, \  \ &r_1=\frac{2N\left(N+4\right)\left(N-2s+8\right)}{8N\left(N+4\right)+\left(N-2s\right)\left(N^2-32\right)},\\
		q_2=\frac{2N-8}{N-8},\ &r_2=\frac{2N\left(N-4\right)}{N^2-8N+32},\\
		q_3=\frac{2\left(N-2s+8\right)}{N-2s},\ &r_3=\frac{2N\left(N-2s+8\right)}{\left(N-4\right)\left(N-2s\right)+8N},\\
		r_4=\frac{2N}{N-4}.
	\end{cases}
	$$
\end{theorem}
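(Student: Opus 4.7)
The natural plan is a small-data contraction mapping argument applied to the Duhamel formulation
\[
\Phi(u)(t) = e^{it(\Delta^2+\mu\Delta)}\phi + i\int_0^t e^{i(t-\tau)(\Delta^2+\mu\Delta)} f(u(\tau))\,d\tau
\]
carried out in a complete metric space built from the Strichartz norm $\mathcal{X}$ (intersected with $L^\infty_t H^s_x$) and equipped with a weaker distance to avoid differentiating the nonlinearity when closing the contraction. The first step is to record the appropriate space-time estimates for the biharmonic semigroup. For $0<s\le 4$ one uses the standard (Besov-valued) Strichartz estimates of Pausader; the pair $(q_1,r_1)$ is $\dot H^{s_c}$-admissible, and the mixed Besov-in-time / Besov-in-space spaces defined in Section \ref{s2} naturally absorb the gain of $s/4$ temporal derivatives coming from the fourth-order symbol. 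For $4<s<8$ one differentiates the equation (or uses equivalent lifting of derivatives onto $(\Delta^2+\mu\Delta) u$) so that part of the regularity can be moved into time, which is why the space $\mathcal{X}$ acquires the extra factor $H^{1,q_3}(\R,B^{s-4}_{r_3,2})$ for $4<s<6$ and the pair of Besov-in-time spaces for $6\le s<8$.

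The core analytic work is the nonlinear estimate for $f(u)$ in the dual Strichartz space. Using (\ref{fu}), Leibniz and the fractional chain rule for Besov norms, together with Sobolev embedding $B^s_{r,2}\hookrightarrow L^{\alpha+2}$ at the scale-invariant exponent, one expects to obtain schematically
\[
\|f(u)\|_{N(I)} \lesssim \|u\|_{\mathcal{X}(I)}^{\alpha+1},
\qquad
\|f(u)-f(v)\|_{N_0(I)} \lesssim \bigl(\|u\|_{\mathcal{X}}^\alpha+\|v\|_{\mathcal{X}}^\alpha\bigr)\|u-v\|_{\mathcal{X}_0(I)},
\]
where $N,N_0$ are the dual Strichartz norms and $\mathcal{X}_0$ is the weaker space with one fewer spatial/temporal derivative. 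The hypothesis $f\in\mathcal{C}(\alpha)$ with regularity $[\alpha]+1$ is exactly what allows the fractional chain rule to be applied when $\alpha$ is noninteger. The condition $\alpha>\alpha(s)=\max\{s/4-1,s-5\}$ for $4<s<8$ is precisely what is needed so that $\alpha$ exceeds the number of derivatives we must commute through $f$: for $s\in(4,6)$ we need $\alpha>s/4-1$ to control one full time derivative of $f(u)$ via $|f'(u)|\lesssim|u|^\alpha$, and for $s\in[6,8)$ the stronger constraint $\alpha>s-5$ appears when estimating higher mixed derivatives in the space $B^{(s-2)/4}_{2,2}(\R,L^{r_4})$.

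With these nonlinear estimates in hand, choose $R>0$ small and let $B_R=\{u\in\mathcal{X}:\|u\|_{\mathcal{X}}\le R\}$ endowed with the $\mathcal{X}_0$-distance. The Strichartz inequality gives $\|e^{it(\Delta^2+\mu\Delta)}\phi\|_{\mathcal{X}}\le C\|\phi\|_{H^s}$, so if $\|\phi\|_{H^s}$ is sufficiently small, then
\[
\|\Phi(u)\|_{\mathcal{X}}\le C\|\phi\|_{H^s} + CR^{\alpha+1}\le R,
\qquad
\|\Phi(u)-\Phi(v)\|_{\mathcal{X}_0}\le CR^\alpha\|u-v\|_{\mathcal{X}_0}\le \tfrac12\|u-v\|_{\mathcal{X}_0},
\]
making $\Phi$ a contraction on $B_R$ and yielding a unique fixed point $u\in\mathcal{X}$. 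A standard bootstrap using the Strichartz estimate on $u-e^{it(\Delta^2+\mu\Delta)}\phi$ upgrades this to $u\in C(\R,H^s)$, and uniqueness in the full class $C(\R,H^s)\cap\mathcal{X}$ follows from the same difference estimate applied on arbitrary time subintervals.

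The main obstacle I anticipate is the high-regularity case $4<s<8$: justifying the nonlinear Besov estimates with temporal derivatives, where one must simultaneously exploit (i) the fractional chain rule of the appropriate order (requiring $f\in C^{[\alpha]+1}$), (ii) Sobolev embeddings at the critical scaling to recover the $L^{\alpha+2}$ norm of $u$, and (iii) the delicate interplay between time and space regularity that forces the lower bound $\alpha>\alpha(s)$. Verifying that the chosen exponents $(q_i,r_i)$ are simultaneously biharmonic-Strichartz admissible and compatible with these embeddings is the bookkeeping bulk of the proof, but the underlying mechanism is the contraction argument above.
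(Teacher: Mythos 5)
Your overall strategy --- Duhamel formulation, contraction in a ball of $\mathcal{X}\cap L^\infty_tH^s_x$ under a weaker metric, differentiating the equation in time for $s>4$, and using $\alpha>\alpha(s)$ to make the chain rule applicable --- is exactly the paper's. But there are two genuine gaps. First, for $0<s\le 4$ you assert that "the standard (Besov-valued) Strichartz estimates of Pausader" suffice. They do not: if all $s$ derivatives are placed on the spatial side, the fractional chain rule (Lemma \ref{l1}) forces $\alpha+1>s$, whereas the theorem assumes only $\alpha>0$. The entire reason the space $B^{s/4}_{q_1,2}(\R,L^{r_1})$ appears is that one must first \emph{prove} a modified Strichartz estimate (Proposition \ref{p1}) converting $4\theta$ spatial derivatives into $\theta$ temporal derivatives for the biharmonic group; its proof (Section \ref{s3}) occupies a substantial part of the paper and rests on a kernel decomposition $K_j$ adapted to the symbol $|\xi|^4-\mu|\xi|^2$, Lorentz--Young inequalities, and the Chemin--Lerner norm $l^2L^{\overline q}L^{\overline r}$ that appears on the right-hand side of the inhomogeneous estimate. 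Your proposal neither states nor establishes this estimate, and without it the contraction cannot be closed at the claimed regularity of $f$.

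Second, the nonlinear estimates are left "schematic," and you yourself flag them as the main obstacle; they are the other half of the work. In particular: (i) controlling $\|f(u)\|_{l^2L^{\overline q}L^{\overline r}}$ requires an $\ep$-loss Sobolev embedding argument (the paper's (\ref{141})--(\ref{146})), not just the dual-Strichartz bound you write; (ii) for $4<s<6$ one must recover $\|Su\|_{L^\infty H^s}$ from $\|\partial_t(Su)\|_{L^\infty H^{s-4}}$ through the equation, which needs an interpolation-and-absorption step ((\ref{1411})--(\ref{1412})), and then a separate, delicate argument to upgrade $u\in L^\infty H^s$ to $u\in C(\R,H^s)$ (the paper interpolates $f(u)$ between $B^0_{\rho_0,\infty}$ and $B^{s-4+\ep}_{\rho_\ep,2}$); (iii) the case $s=4$ is an endpoint that the scheme above does not cover, and the paper resolves it by importing a separate local theory (Proposition \ref{p2}). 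None of these steps is doomed to fail, but they are where the theorem is actually proved; as written, the proposal is an accurate road map of the paper's argument rather than a proof.
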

\begin{remark}
	Note that the lower bound $\alpha(s)$ is a continuous function of $s$. Moreover, the condition $\alpha>\max \left\{\frac{s}{4}-1,s-5\right\}$ is natural for $s>4$, since one time derivative corresponds to four spatial derivatives and the $s$-derivative of $u$ by the spatial variables requires the $(s-4)$-derivatives of $f(u)$ by (\ref{NLS}).
\end{remark}
\begin{remark}
	Theorem \ref{T1} improves the result in Y. Wang \cite{Wang} in the case $0<s<4$, where he made an additional radial assumption for the initial datum.
\end{remark}
Theorem \ref{T1}  may be considered as a generalization of the corresponding results for  the classical nonlinear Schr\"odinger equation. In \cite{Ca10}, Cazenave and Weissler showed the existence of the time global solutions for the small initial data of the $H^s$ critical Cauchy problem
\begin{equation}\label{CNLS}
	\begin{cases}i\partial_tu+\Delta u+\lambda|u|^\alpha u=0,\\
		u(0,x)=\phi(x)\in H^s(\mathbb{R}^N),\end{cases}
\end{equation}
for $0 \leq s<\fc N2$ and $[s]<\alpha=\fc4{N-2s}$. The condition $[s]<\alpha$ is the required regularity for $f(u)$, which can be improved to $s-1<\alpha$ by applying  the nonlinear estimates obtained in  Ginibre--Ozawa--Velo  \cite{G}, and Nakamura--Ozawa  \cite{Na3}.
Recently,   Nakamura--Wada \cite{Na,Na2} constructed some modified Strichartz's estimate  and  Strichartz type estimates in mixed Besov spaces   to obtain small global solutions with less regularity assumption for the nonlinear term. More precisely, they showed that if $1<s<4$,  $s\neq2$, $\alpha_0(s)<\alpha=\fc4{N-2s}$, with
$$
\alpha_{0}(s):=\left\{\begin{array}{ll}
	0, & \text { for } 0<s<2, \\
	\frac{s}{2}-1, & \text { for } 2<s<4,
\end{array}\right.
$$
the Cauchy problem (\ref{CNLS}) admits a unique time global solution for the small initial data. Theorem \ref{T1} extends the results in \cite{Na,Na2} into the biharmonic Schr\"odinger case.

The main tools used to prove Theorem \ref{T1} is the following modified Strichartz's estimate  for fourth-order Schr\"odinger equation, by which we can replace the spatial derivative of order $4\theta$ with the  time derivative of order $\theta$ in terms of Besov spaces.  For the definitions of the biharmonic admissible pairs set $\Lambda_b$,  and the Chemin--Lerner type space $l^2L^{\overline{q}}L^{\overline{r}}$, we refer to Section \ref{s2}.

\begin{proposition}\label{p1}
	Assume  $0<\theta<1$, $0\le\sigma<4\theta$, $(q,r),(\gamma,\rho)\in\Lambda_b$ are two  biharmonic admissible pairs, and  $\mu=0$ or $-1$. Assume also that   $1\le \overline{q}\le q$, $1\le  \overline{r}\le\infty$ satisfy  $\frac{4}{\overline{q}}-N\left(\frac{1}{2}-\frac{1}{\overline{r}}\right)=4(1-\theta)$.  Then for any $u_0\in H^{4\theta}$ and  $f\in B^{\theta}_{\gamma',2}(\R,L^{\rho'})\cap l^2L^{\overline{q}}\left(\R,L^{\overline{r}}\right)$, we have $e^{it(\Delta^2+\mu\Delta)}u_0, Gf\in C(\R,H^{4\theta})$
	where
	\begin{equation}
		(Gf)(t)=\int_0^te^{i(t-s)(\Delta^2+\mu\Delta)}f(s)ds.\notag
	\end{equation}
	Moreover, the following inequalities hold,
	\begin{equation}\label{i1}
		\|e^{it(\Delta^2+\mu\Delta)}u_0\|_{ L^qB^{4\theta}_{r,2}\cap B^{\theta-\sigma/4}_{q,2}{B^\sigma_{r,2}}}\lesssim \|u_0\|_{H^{4\theta}},
	\end{equation}
	\begin{equation}\label{i2}
		\|Gf\|_{ L^q B^{4\theta}_{r,2}}\lesssim \|f\|_{B^{\theta}_{\gamma',2}L^{\rho'}}+\|f\|_{l^2L^{\overline{q}}L^{\overline{r}}},
	\end{equation}
	\begin{equation}\label{i3}
		\|Gf\|_{B_{q, 2}^{\theta-\sigma/4}B_{r, 2}^{\sigma}} \lesssim\|f\|_{B_{\gamma^{\prime}, 2}^{\theta}L^{\rho^{\prime}}}+\|f\|_{l^2L^{\overline{q}}L^{\overline{r}}}.
	\end{equation}
\end{proposition}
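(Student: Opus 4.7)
The plan is to reduce each estimate to the standard biharmonic Strichartz inequalities of Pausader \cite{Pa} by means of a Littlewood--Paley decomposition in space. The key observation is that on a dyadic piece $P_k u$ of spatial frequency $\sim 2^k$, the symbol $|\xi|^4-\mu|\xi|^2$ of $\Delta^2+\mu\Delta$ is of size $\sim 2^{4k}$, so the time-frequency of $U(t)P_k u:=e^{it(\Delta^2+\mu\Delta)}P_k u$ is concentrated near $2^{4k}$. This is the mechanism that lets a time derivative of order $\theta$ on a frequency-localized piece be traded for a spatial derivative of order $4\theta$, and it underlies the Bernstein-type equivalences
$$
\|U(t)P_k u\|_{\dot{B}^{\theta-\sigma/4}_{q,2}(\R;L^r)}\sim 2^{4k(\theta-\sigma/4)}\|U(t)P_k u\|_{L^qL^r},\qquad \|P_k u\|_{\dot{B}^\sigma_{r,2}}\sim 2^{k\sigma}\|P_k u\|_{L^r},
$$
which will serve as the workhorses of the proof.

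For \eqref{i1}, I would write $u_0=\sum_k P_k u_0$, apply the standard biharmonic Strichartz estimate to each $U(t)P_k u_0$, and combine it with the above equivalences to obtain on every dyadic piece a bound by $2^{4\theta k}\|P_k u_0\|_{L^2}$; square-summing in $k$ then reproduces the $\dot{H}^{4\theta}$ norm, while the low-frequency block supplies the $L^2$ part of $H^{4\theta}$. Continuity in $H^{4\theta}$ follows from the usual approximation argument once both sides are finite.

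For the inhomogeneous estimates \eqref{i2}--\eqref{i3}, I would decompose $f=\sum_k P_k f$ and apply the standard inhomogeneous biharmonic Strichartz estimate on each frequency piece. The task then reduces to dominating $2^{4\theta k}\|P_k f\|_{L^{\gamma'}L^{\rho'}}$ by the right-hand sides. For the part where the dyadic time-frequency of $f$ is comparable to $2^{4k}$, an additional Littlewood--Paley decomposition in time together with standard Fourier multiplier bounds converts the weight $2^{4\theta k}$ into the temporal Besov norm and produces the $\|f\|_{B^{\theta}_{\gamma',2}L^{\rho'}}$ contribution. For the \emph{off-resonant} piece, where the time- and space-frequency scales do not match, the equivalence between time and spatial derivatives breaks down; here I would invoke the shifted admissibility $\frac{4}{\overline{q}}-N(\frac{1}{2}-\frac{1}{\overline{r}})=4(1-\theta)$ to pair the $4\theta$-derivative loss with a Strichartz pair $(\overline{q},\overline{r})$ at exactly the correct scaling, which is precisely what the auxiliary $l^2L^{\overline{q}}L^{\overline{r}}$ norm is designed to control.

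The main obstacle will be the precise accounting of the mixed time-space Besov norm $B^{\theta-\sigma/4}_{q,2}B^\sigma_{r,2}$: the temporal index $\theta-\sigma/4$ is exactly the one that, on a spatial frequency $2^k$ piece, makes the combined weight $2^{4k(\theta-\sigma/4)}\cdot 2^{k\sigma}=2^{4k\theta}$ agree with the spatial Besov weight $2^{4k\theta}$ appearing in $B^{4\theta}_{r,2}$. Making this relation rigorous at the low-frequency end, and matching the endpoint admissibility pairs in the Chemin--Lerner norm (especially when $\overline{r}=\infty$ or $\overline{q}=q$), will be the most technical part of the argument and will force one to interpolate between the two contributions $B^\theta_{\gamma',2}L^{\rho'}$ and $l^2L^{\overline{q}}L^{\overline{r}}$ rather than estimate either one in isolation.
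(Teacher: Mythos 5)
Your overall strategy coincides with the paper's: a Littlewood--Paley decomposition adapted to the symbol $|\xi|^4-\mu|\xi|^2$ (so that temporal blocks of the free evolution match spatial blocks, which is the paper's identity $\phi_j*_t e^{it(\Delta^2+\mu\Delta)}f=e^{it(\Delta^2+\mu\Delta)}\phi_{j/4}*_xf$ together with the norm equivalence of Lemma \ref{dj}), reduction of \eqref{i1} to the standard biharmonic Strichartz estimate block by block, and a resonant/off-resonant splitting of the Duhamel term with the Chemin--Lerner norm absorbing the off-resonant contribution. For \eqref{i1} your sketch is essentially complete.

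For \eqref{i2}--\eqref{i3}, however, there is a genuine gap at exactly the point you flag as the place where ``the equivalence between time and spatial derivatives breaks down.'' Invoking the shifted admissibility relation $\frac{4}{\overline q}-N(\frac12-\frac1{\overline r})=4(1-\theta)$ does not by itself control the off-resonant piece: no Strichartz estimate applies there, because the relevant operator is convolution against the kernel with symbol $\hat\phi_k(|\xi|^4-\mu|\xi|^2)(1-\hat\chi_k(\tau))/(\tau-|\xi|^4+\mu|\xi|^2)$, which is supported away from the characteristic surface. What is actually needed is a quantitative decay bound for this kernel, namely $\|K_k\|_{L^{\tilde q,1}L^{\tilde r}}\lesssim 2^{-k\theta}$ uniformly in $k$ for the dual exponents determined by the shifted scaling (the paper's Lemma \ref{L1}); only then does Young's inequality in Lorentz spaces convert the off-resonant term into $2^{-k\theta}\|\chi_{k/4}*_xf\|_{L^{q_0,\infty}L^{r_0}}$. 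Proving that bound is the technical core of the argument: one rescales $K_k$ to a unit-scale kernel $L_k$, but because of the lower-order term $\mu\Delta$ the rescaled symbol becomes $|\xi|^4-\mu 2^{-k/2}|\xi|^2$ and still depends on $k$, so one must show rapid pointwise decay $|L_k(t,x)|\lesssim(1+|t|+|x|)^{-l}$ with constants uniform in $k$ by repeated integration by parts. Your proposal contains no substitute for this step. A second, smaller omission: after the off-resonant piece is bounded in $l^2L^{q_0,\infty}L^{r_0}$ (and $l^2L^{q_1,\infty}L^{r_1}$), one still needs an interpolation-type lemma (the paper's Lemma \ref{f}) to dominate these Lorentz-space quantities by the two norms $\|f\|_{B^\theta_{\gamma',2}L^{\rho'}}$ and $\|f\|_{l^2L^{\overline q}L^{\overline r}}$ actually appearing on the right-hand side; you allude to ``interpolating between the two contributions'' but do not indicate how the mismatch of exponents is resolved.
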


In this paper, we first establish the modified Strichartz's estimate (\ref{i1})--(\ref{i3}) for the biharmonic Schr\"odinger equation in the spirit of \cite{Na,Na2}. Then we establish various nonlinear estimates and use  the contraction mapping principle based on  the modified Strichartz's estimate to complete the proof of Theorem \ref{T1}.

The rest of the paper is organized as follows. In Section \ref{s2}, we introduce some notations and give a review of the biharmonic Strichartz's estimates. In Section \ref{s3}, we establish the  modified Strichartz's estimate.  In Section \ref{s4}, we give the proof of Theorem \ref{T1}.
\section{Preliminary}\label{s2}
If $X, Y$ are nonnegative quantities, we sometimes use $X\lesssim Y$ to denote the estimate $X\leq CY$ for some positive constant $C$. Pairs of conjugate indices are written as $p$ and $p'$, where $1\leq p\leq\infty$ and $\frac1p+\frac1{p'}=1$. We use $L^p (\mathbb{R}^N)$ to denote the usual Lebesgue space and  $L^\gamma(I,L^\rho(\mathbb{R}^N))$ to denote the space-time Lebesgue spaces with the  norm
\begin{gather}\notag
	\|f\|_{L^\gamma(I,L^\rho(\mathbb{R}^N))}:=\left(\int_I\|f\|_{L_x^\rho}^\gamma dt\right)^{1/\gamma}
\end{gather}
for any time slab $I\subset\mathbb{R}^N$, with the usual modification when either $\gamma$ or $\rho$ is infinity.  We define the  Fourier transform on $\R,\R^N$ and $\R^{1+N}$   by
\begin{align*}
	&\hat f(\tau)=\int_\R f(t)e^{-it\tau}dt,\qquad\qquad\quad\qquad\qquad\tau\in\R,\\
	&\hat f(\xi)=\int_{\R^N}f(x)e^{-ix\cdot\xi}d\xi,\qquad\quad\qquad\qquad\xi\in\R^N,\\
	&\widetilde f(\tau,\xi)=\int_{\R^{1+N}}f(t,x) e^{-ix\cdot\xi-it\tau}dxdt,\quad(\tau,\xi)\in\R\times\R^N,
\end{align*}
respectively.

Next, we  review the definition of Besov spaces.  Let $\phi$ be a smooth function whose Fourier transform $\hat\phi$ is a non-negative even function which satisfies supp $\hat\phi\subset\{\tau\in\R,1/2\le|\tau|\le2\}$ and $\sum_{k=-\infty}^{\infty}\hat\phi(\tau/{2^k})=1$ for any $\tau\neq0$. For $k\in\mathbb{Z}$, we put  $\hat\phi_k(\cdot)=\hat\phi(\cdot/{2^k})$ and $\psi=\sum_{j=-\infty}^0\phi_j$.   Moreover, we define  $\chi_k=\sum_{k-2}^{k+2}\phi_j$ for $k\ge1$ and $\chi_0=\psi+\phi_1+\phi_2$.  For $s\in\R$ and $1 \leq p, q \leq \infty$, we define  the Besov space
$$
B_{p,q}^{s}\left({\R}^{N}\right)=\left\{u \in \mathcal{S}^{\prime}\left(\R^{N}\right), \|u\|_{B_{p,q}^{s}\left(\R^{N}\right)}<\infty\right\},
$$
where $\mathcal{S}^{\prime}\left({\R}^{N}\right)$ is the space of tempered distributions on $\R^{N},$ and
$$
\|u\|_{B_{p, q}^{s}\left(\R^{N}\right)}=\left\|\psi *_{x} u\right\|_{L^{p}\left(\R^{N}\right)}+\left\{\begin{array}{ll}
	\left\{\sum_{k \geq 1}\left(2^{s k}\left\|\phi_{k} *_{x} u\right\|_{L^{p}\left(\R^{N}\right)}\right)^{q}\right\}^{\frac1q}, & q<\infty, \\
	\sup _{k \geq 1} 2^{s k}\left\|\phi_{k}*_{x} u\right\|_{L^{p}\left(\R^{N}\right)}, & q=\infty,
\end{array}\right.
$$
where $*_{x}$ denotes the convolution with respect to the variables in $\R^{N}$.  Here we use $\phi_k*_xu$ to denote $\phi_k(|\cdot|)*_xu$. We also define $\chi_k*_xu,\psi*_xu,\chi_0*_xu$ similarly. This is an abuse of symbol, but no confusion is likely to arise.

For $1\le q$, $\alpha\le \infty$ and a Banach space $V$, we denote the Lebesgue space for functions on $\R$ to $V$ by $L^q\left(\R,V\right)$ and the Lorentz space by $L^{q,\alpha}\left(\R,V\right)$. We define the Sobolev space $H^{1,q}\left(\R,V\right)=\left\{u:u\in L^q\left(\R,V\right),\partial_tu\in L^q \left(\R,V\right)\right\}$.
For $1\le\alpha,r,q\le\infty$, we denote  the Chemin-Lerner type space $$l^{\alpha} L^{q}\left(\mathbb{R}, L^{r}\left(\mathbb{R}^{N}\right)\right)=\left\{u \in L_{\mathrm{loc}}^{1}\left(\mathbb{R}, L^{r}\left(\mathbb{R}^{N}\right)\right), \|u\|_{\ell^{\alpha} L^{q}\left(\mathbb{R},\  L^{r}\left(\mathbb{R}^{N}\right)\right)}<\infty\right\}$$ with the norm defined by
\[
\|u\|_{l^\alpha L^q(\R,L^r(\R^N))}=\|\psi*_xu\|_{L^q(\R,L^r(\R^N))}+\left(\sum_{k\ge1}\|\phi_k*_xu\|_{L^q(\R,L^r(\R^N))}^\alpha\right)^{1/\alpha}
\]
with trivial modification if $\alpha=\infty$. We also define $l^\alpha L^{q,\infty}\left(\R,L^r\left(\R^N\right)\right)$ similarly.
Finally, we  define the Besov space of vector-valued functions. Let $\theta \in\R, 1 \leq q, \alpha \leq \infty$ and $V$ be a Banach space. We put
$$
B_{q, \alpha}^{\theta}(\R, V)=\left\{u \in \mathcal{S}^{\prime}(\R, V) ;\|u\|_{B_{q, \alpha}^{\theta}(\R, V)}<\infty\right\}
$$
where
\begin{equation}\label{9221}
	\|u\|_{B_{q, \alpha}^{\theta}(\R, V)}=\left\|\psi *_{t} u\right\|_{L^{q}(\R , V)}+\left\{\sum_{k \geq 1}\left(2^{\theta k}\left\|\phi_{k} *_{t} u\right\|_{L^{q}(\R , V)}\right)^{\alpha}\right\}^{1 / \alpha}\notag
\end{equation}
with trivial modification if $\alpha=\infty.$ Here $*_{t}$ denotes the convolution in
$\R$.

In this paper, we  omit the integral domain for simplicity unless noted otherwise. For example, we write $l^\alpha L^qL^r=l^\alpha L^q\left(\R,L^r(\R^N)\right)$,  $L^qB^s_{r,2}=L^q\left(\R,B^s_{r,2}(\R^N)\right)$ and $B^{\theta-\sigma/4}_{q,2}B^\sigma_{r,2}=B^{\theta-\sigma/4}_{q,2}(\R,$ $B^\sigma_{r,2}(\R^N))$ etc.

Following standard notations, we introduce Schr\"odinger admissible pair as well as the corresponding Strichartz's estimate for the biharmonic Schr\"odinger equation.

\begin{definition}\label{bpair}
	A pair of Lebesgue space exponents $(\gamma, \rho)$ is called  biharmonic Schr\"odinger admissible for the equation (\ref{NLS}) if  $(\gamma, \rho)\in \Lambda_b$ where
	\begin{equation*}
		\Lambda_b=\{(\gamma, \rho):2\leq \gamma, \rho\leq\infty, \  \frac4\gamma+\frac N\rho=\frac N2, \  (\gamma, \rho, N)\neq(2, \infty, 4)\},
	\end{equation*}
\end{definition}
\begin{lemma}[Strichartz's estimate  for BNLS, \cite{Pa}]\label{L2.2S}
	Suppose that $(\gamma,\rho), (a,b)\in\Lambda_b $ are  two biharmonic admissible pairs, and  $\mu=0$ or $-1$. Then for any  $u\in L^2(\mathbb{R}^N)$ and $h\in L^{a'}(\R,L^{b'}(\mathbb{R}^N))$, we have
	\begin{gather}\label{sz}
		\|e^{it(\Delta^2+\mu\Delta)}u\|_{L^\gamma(\R, L^\rho)}\leq C\|u\|_{L^2},
	\end{gather}
	\begin{equation}\label{sz1}
		\|\int_{\R}e^{-is(\Delta^2+\mu\Delta)}h(s)\ ds \|_{L^2}\leq C\|h\|_{L^{a'}(\R,L^{b'})},
	\end{equation}
	\begin{equation}\label{SZ}
		\|\int_0^te^{i(t-s)(\Delta^2+\mu\Delta)}h(s)\ ds \|_{L^\gamma(\R, L^\rho)}\leq C\|h\|_{L^{a'}(\R,L^{b'})}.
	\end{equation}
\end{lemma}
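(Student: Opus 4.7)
The proposition gives three bounds: the linear estimate (\ref{i1}) for $e^{it(\Delta^2+\mu\Delta)}u_0$ and the Duhamel bounds (\ref{i2})--(\ref{i3}) for $Gf$. My unifying strategy, in the spirit of Nakamura--Wada, would be to exploit the dispersion relation $\tau=|\xi|^4-\mu|\xi|^2$, which equates a single time derivative of the free flow with four spatial derivatives (up to lower-order contributions when $\mu=-1$). I would first decompose $u_0=\sum_k\phi_k*_x u_0$ dyadically in space; since $e^{it(\Delta^2+\mu\Delta)}$ commutes with each $\phi_k*_x$, the estimate (\ref{sz}) applied piece by piece, combined with Minkowski (valid because $q\ge 2$) and the Plancherel identification $\bigl(\sum_k 2^{8\theta k}\|\phi_k*_x u_0\|_{L^2}^2\bigr)^{1/2}\simeq\|u_0\|_{H^{4\theta}}$, yields the $L^q B^{4\theta}_{r,2}$ half of (\ref{i1}). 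For the $B^{\theta-\sigma/4}_{q,2}B^\sigma_{r,2}$ half, the critical observation is that $\phi_j*_t\bigl(e^{it(\Delta^2+\mu\Delta)}\phi_k*_x u_0\bigr)$ is (modulo rapidly decaying tails) supported where $2^j\sim 2^{4k}$, because the time-Fourier symbol of the flow restricted to a space-frequency $2^k$ shell is of size $2^{4k}$. This collapses the mixed Besov weight $2^{(\theta-\sigma/4)j}2^{\sigma k}$ to $2^{4\theta k}$, recovering the first bound; the constraint $\sigma<4\theta$ ensures the off-diagonal tails are summable.

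For the Duhamel bounds (\ref{i2})--(\ref{i3}), I would perform a joint time-space Littlewood--Paley decomposition $f=\sum_{j,k}\phi_j*_t\phi_k*_x f$ and split $Gf$ accordingly. The resonant pieces with $2^j\sim 2^{4k}$ are controlled by the dual Strichartz estimate (\ref{sz1}) together with (\ref{SZ}): each piece $G(\phi_j*_t\phi_k*_x f)$ has $L^qL^r$-norm bounded by $\|\phi_j*_t\phi_k*_x f\|_{L^{\gamma'}L^{\rho'}}$, and squaring and summing in $j$ with weight $2^{2\theta j}$ reassembles $\|f\|_{B^\theta_{\gamma',2}L^{\rho'}}$ while the $k$-sum with weight $2^{8\theta k}$ balances against the $L^q B^{4\theta}_{r,2}$ target. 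The nonresonant pieces with $2^j\not\sim 2^{4k}$ admit a pointwise-in-time kernel estimate: the symbol $\tau-|\xi|^4+\mu|\xi|^2$ is bounded below by $\max(2^j,2^{4k})$, so repeated integration by parts yields fast decay in $|j-4k|$. These pieces are controlled by $\|f\|_{\ell^2 L^{\overline{q}}L^{\overline{r}}}$, and the scaling condition $\tfrac{4}{\overline{q}}-N\bigl(\tfrac12-\tfrac{1}{\overline{r}}\bigr)=4(1-\theta)$ is exactly what makes $(\overline{q},\overline{r})$ recombine, after summation, at the $H^{4\theta}$ scale. The estimate (\ref{i3}) is then obtained from (\ref{i2}) by the same trick as in paragraph one: on each space-dyadic piece of $Gf$, time and space frequencies are locked at the dispersion relation, so the mixed Besov weight collapses to the space weight.

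For the continuity $e^{it(\Delta^2+\mu\Delta)}u_0,\,Gf\in C(\R,H^{4\theta})$, I would argue by approximation: for Schwartz data (respectively smooth compactly supported $f$) the claim is classical from the spectral theorem and dominated convergence, and a version of (\ref{i1})--(\ref{i3}) with $(\infty,2)$ in place of $(q,r)$ (legitimate because $(\infty,2)\in\Lambda_b$) passes the convergence to the limit. The chief obstacle I anticipate is the quantitative off-diagonal decay for the nonresonant Duhamel pieces: one must obtain sharp enough bounds on the kernel of $G$ restricted to cells with $2^j\not\sim 2^{4k}$ so that the resulting double series, simultaneously weighted by $2^{4\theta k}$ in space and by the appropriate time weight $2^{(\theta-\sigma/4)j}$ or $2^{\theta j}$, converges and reassembles into the stated norms of $f$. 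The low-frequency correction $\psi*_x$ and the lower-order term $\mu\Delta$ are minor but nontrivial complications: the symbol $|\xi|^4-\mu|\xi|^2$ only behaves like $|\xi|^4$ for $|\xi|\gtrsim 1$, so the frequencies $k=0$ and $j=0$ must be treated separately using the Strichartz estimates directly.
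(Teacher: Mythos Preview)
You have addressed the wrong statement. The lemma you were asked to prove is Lemma~\ref{L2.2S}, the \emph{classical} Strichartz estimate for the biharmonic Schr\"odinger propagator, with the three inequalities labelled (\ref{sz}), (\ref{sz1}), (\ref{SZ}); the paper does not prove this lemma at all but cites it from Pausader~\cite{Pa}. Your entire proposal instead sketches a proof of Proposition~\ref{p1}, the \emph{modified} Strichartz estimate, whose inequalities are labelled (\ref{i1}), (\ref{i2}), (\ref{i3}). These are different results: Lemma~\ref{L2.2S} is a standard $L^2$-level Strichartz estimate with no Besov weights and no $\theta,\sigma,\overline{q},\overline{r}$ parameters, whereas Proposition~\ref{p1} is the Besov-space refinement that trades four spatial derivatives for one time derivative.

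If your sketch is read as an attempt at Proposition~\ref{p1}, it is in fact close to the paper's own argument in Section~\ref{s3}: the identity $\phi_j*_t e^{it(\Delta^2+\mu\Delta)}=e^{it(\Delta^2+\mu\Delta)}\phi_{j/4}*_x$ (equation~(\ref{a1})) is exactly your ``time-frequency $2^j$ locks to space-frequency $2^{4k}$'' observation; the resonant/nonresonant split you describe is implemented via the $\hat\chi_k(\tau)$ cutoff in (\ref{1272}); and the off-diagonal kernel decay you anticipate is precisely Lemma~\ref{L1}. But none of this is relevant to Lemma~\ref{L2.2S}, whose proof (had one been requested) would proceed from the dispersive decay estimate of Ben-Artzi--Koch--Saut~\cite{Ben} via the abstract Keel--Tao machinery, with no Littlewood--Paley decomposition needed.
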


\section{Modified Strichartz's estimate }\label{s3}
In this section, we  prove Proposition \ref{p1}. First, we prepare several lemmas. We assume the functions $\phi,\chi_0,\psi,\phi_j,\chi_j$ are defined in Section \ref{s2}.
\begin{lemma}\label{L1}
	Assume $N\ge1$, $\mu=0$ or $-1$, and  $K(t,x), K_j(t,x)(j\ge1):\R\times\R^N\rightarrow\mathbb{C}$ are defined by
	\begin{align*}
		K(t,x)=\fc1{(2\pi)^{1+N}}\int e^{it\tau+ix\cdot \xi}\fc{\hat\psi(|\xi|^4-\mu|\xi|^2)(1-\hat\chi_0(\tau))}{i(\tau-|\xi|^4+\mu|\xi|^2)} d\tau  d\xi,
	\end{align*}
	\begin{equation}
		K_j(t,x)=\fc1{(2\pi)^{1+N}}\int e^{it\tau+ix\cdot \xi}\fc{\hat\phi_j(|\xi|^4-\mu|\xi|^2)(1-\hat\chi_j(\tau))}{i(\tau-|\xi|^4+\mu|\xi|^2)} d\tau  d\xi.\notag
	\end{equation}
	Then for any $0<\theta<1$, $1\le q\le\infty$, $1\le r\le\infty$ with $\fc4{q}-N(1-\fc1{ r})=4\theta$, we have
	\begin{equation*}
		\|K\|_{L^{ q,1}L^{ r}}\le C,\quad\text{and  }\quad\|K_j\|_{L^{ q,1}L^{ r}}\le C2^{-j\theta},
	\end{equation*}
	where the constant $C$ independent of $j\ge1$.
\end{lemma}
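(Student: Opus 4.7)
The plan is to compute the $\tau$-integral in $K_j$ explicitly, reducing the estimate to a dispersive bound on the biharmonic propagator at a Littlewood--Paley scale, and then to pass to the Lorentz $L^{q,1}$ norm.

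First, exploit the fact that $\hat\chi_j\equiv 1$ on the support of $\hat\phi_j$ to rewrite
$$\frac{\hat\phi_j(\omega(\xi))(1-\hat\chi_j(\tau))}{i(\tau-\omega(\xi))}
= -\hat\phi_j(\omega(\xi))\int_0^1 \hat\chi_j'\bigl(\omega(\xi)+s(\tau-\omega(\xi))\bigr)\,ds,$$
where $\omega(\xi)=|\xi|^4-\mu|\xi|^2$, thereby removing the apparent singularity at $\tau=\omega(\xi)$. The $\tau$-integration then converges absolutely, and a short direct computation produces, for $t>0$ (with the $t<0$ case being symmetric),
$$K_j(t,x)=\int_t^\infty \chi_j(u)\, S_{t-u}(x)\,du,\qquad S_\tau(x)=\frac{1}{(2\pi)^N}\int e^{ix\cdot\xi+i\tau\omega(\xi)}\hat\phi_j(\omega(\xi))\,d\xi.$$
Here $S_\tau$ is the integral kernel of $e^{i\tau(\Delta^2+\mu\Delta)}$ composed with the spectral projector associated to $\hat\phi_j(\Delta^2+\mu\Delta)$, and $\chi_j(u)$ is concentrated on $|u|\lesssim 2^{-j}$ with amplitude of order $2^j$.

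Next, I would estimate $\|S_\tau\|_{L^r}$ via the standard $L^1\to L^\infty$ dispersive bound for $e^{i\tau(\Delta^2+\mu\Delta)}$, combined with Bernstein's inequality at the spatial frequency scale $|\xi|\sim 2^{j/4}$ (valid because $\omega(\xi)\sim 2^j$ on the support of $\hat\phi_j(\omega(\xi))$ for $j\ge 1$, with a routine low-frequency correction when $\mu=-1$), interpolated against the trivial $L^2\to L^2$ bound. This produces a pointwise estimate on $\|S_\tau\|_{L^r}$ whose time-decay rate and $j$-dependence are exactly dictated by the scaling identity $4/q-N(1-1/r)=4\theta$.

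Substituting into the representation for $K_j$ and bounding in the two regimes $|t|\lesssim 2^{-j}$ and $|t|\gtrsim 2^{-j}$ produces a profile of weak-$L^q$ character in $t$; passage to the stronger $L^{q,1}$-norm via a layer-cake / real-interpolation argument then yields $\|K_j\|_{L^{q,1}L^r}\lesssim 2^{-j\theta}$, with the factor $2^{-j\theta}$ arising as the balance between the Bernstein gain, the amplitude/localization of $\chi_j$, and the dispersive decay. The unindexed kernel $K$ is handled by the same analysis with $\hat\psi$ and $\hat\chi_0$ replacing $\hat\phi_j$ and $\hat\chi_j$; the essentially bounded frequency support of the former gives the uniform bound $\|K\|_{L^{q,1}L^r}\le C$. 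The main technical obstacle is the bookkeeping of exponents at the Lorentz endpoint --- verifying that the $L^{q,1}$ (rather than merely $L^q$) bound holds precisely at the scaling-critical exponents --- together with the verification of the frequency-localized dispersive estimate when $\mu=-1$, where the lower-order term $\mu|\xi|^2$ in $\omega(\xi)$ must be controlled at intermediate spatial scales.
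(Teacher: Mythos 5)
Your strategy---compute the $\tau$-integral to get a Duhamel-type representation $K_j(t,x)=\int_t^\infty\chi_j(u)S_{t-u}(x)\,du$ and then feed in frequency-localized dispersive estimates for $e^{i\tau(\Delta^2+\mu\Delta)}$---is genuinely different from the paper's. The paper instead rescales: setting $\widetilde{L_j}(\tau,\xi)=\hat\phi(|\xi|^4-\mu2^{-j/2}|\xi|^2)(1-\hat\chi(\tau))/\bigl(i(\tau-|\xi|^4+\mu2^{-j/2}|\xi|^2)\bigr)$ one has $K_j(t,x)=2^{Nj/4}L_j(2^jt,2^{j/4}x)$, and the change of variables converts the scaling relation $4/q-N(1-1/r)=4\theta$ into exactly the factor $2^{-j\theta}$. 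The whole lemma then reduces to the single uniform bound $|L_j(t,x)|\le C_l(1+|t|+|x|)^{-l}$, proved by integration by parts (the denominator is bounded below by $1/4$ on the support of the integrand, and the $\tau$-tail is an explicit $\int\sin\tau/\tau$ computation). Rapid decay places $L_j$ in \emph{every} $L^{q,1}L^r$ simultaneously, so the Lorentz endpoint and the full range $1\le q,r\le\infty$ come for free.

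Measured against that, your sketch has three concrete gaps. First, the claim that the rewritten $\tau$-integral ``converges absolutely'' is false: since $1-\hat\chi_j(\tau)\to1$ as $|\tau|\to\infty$, the integrand (in either form) still decays only like $|\tau-\omega(\xi)|^{-1}$; the integral is conditionally convergent and the tail must be handled explicitly, as the paper does. Second, and most seriously, the $L^{q,1}$ conclusion is the entire point of the lemma (it is what makes the O'Neil/Young convolution against $L^{q_0,\infty}$ work in (\ref{1167})--(\ref{9213})), and a ``weak-$L^q$ profile'' does not imply membership in $L^{q,1}$, since $L^{q,\infty}\not\subset L^{q,1}$; you identify this as the main obstacle but do not resolve it. To close it you would have to verify that the time decay of $\|K_j(t)\|_{L^r}$ away from $|t|\sim2^{-j}$ is \emph{strictly} faster than $|t|^{-1/q}$, which requires tracking the exponents you left implicit. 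Third, the admissible range includes $r<2$ (e.g.\ $q$ near $1/\theta$ forces $r$ near $1$), where interpolating the dispersive $L^1\to L^\infty$ bound with the trivial $L^2$ bound does not reach $\|S_\tau\|_{L^r}$; one needs an $L^1$ bound on the localized kernel, which grows in $|\tau|$ and needs separate care. All three difficulties evaporate in the paper's formulation, which is why the rescaling-plus-rapid-decay route is the more economical one here.
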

\begin{proof}
	The method used here is inspired by the last part of  Lemma 2.4 in \cite{Wada}. We shall prove the estimate for $K_j(t,x)$, the estimate for $K(t,x)$ can be treated in a similar way.
	
	Define $\chi=\sum_{-2}^{2}\phi_j$  and
	\begin{equation}
		\widetilde {L_j}(\tau,\xi)=\fc{\hat\phi(|\xi|^4-\mu2^{- j/2}|\xi|^2)(1-\hat\chi(\tau))}{i(\tau-|\xi|^4+\mu2^{- j/2}|\xi|^2)},\qquad j\ge1.\notag
	\end{equation}
	Then by  Fourier transform
	$$
	\widetilde{K_j}(\tau,\xi)=\fc{\hat\phi_j(|\xi|^4-\mu|\xi|^2)(1-\hat\chi_j(\tau))}{i(\tau-|\xi|^4+\mu|\xi|^2)}
	= 2^{-j}\widetilde {L_j}(\tau/{2^j},\xi/{2^{ j/4}}),\notag
	$$
	so that $K_j(t,x)=2^{{Nj}/4}L_j(2^jt,2^{ j/4}x)$. Moreover, by the  change of variables, we have
	\begin{equation}
		\|K_j\|_{L^{q,1}L^{r}}=2^{j\left(\frac{N}{4}-\frac{1}{q}-\frac{N}{4r}\right)}\|L_j\|_{L^{q,1}L^{ r}}=2^{-j\theta}\|L_j\|_{L^{q,1}L^{ r}}.\notag
	\end{equation}
	Therefore,  it suffices  to show that for  any $l\ge1$, there exists $C>0$ independent of $j\ge1$ such that
	\begin{equation}\label{1172}
		|L_j(t,x)|\le C(1+|t|+|x|)^{-l},\qquad \forall (t,x)\in \R\times\R^N.
	\end{equation}

	We now prove (\ref{1172}). By Fourier inversion formula
	\begin{equation}
		L_j(t,x)=\fc1{(2\pi)^{1+N}}\int\int_{\R^{1+N}}e^{it\tau+ix\cdot\xi}\fc{\hat\phi(|\xi|^4-\mu2^{-j/2}|\xi|^2)(1-\hat\chi(\tau))}{i(\tau-|\xi|^4+\mu2^{- j/2}|\xi|^2)} d\tau d\xi.\notag
	\end{equation}
	Note that on the support of the integrand of $L_{j},$ we must have $|\tau| \notin[1 / 4,4]$ and $\left||\xi|^4-\mu2^{-j/2}|\xi|^{2}\right| \in[1 / 2,2],$ so that $\left|\tau-|\xi|^4+\mu2^{-j/2}|\xi|^2\right| \geq 1/4$. Therefore, we deduce that the following integral
	\[
	\int_{\left|\tau-|\xi|^4+\mu2^{-j/2}|\xi|^2\right|  \leq 10} e^{it\tau+ix\cdot\xi}\fc{\hat\phi(|\xi|^4-\mu2^{- j/2}|\xi|^2)(1-\hat\chi(\tau))}{i(\tau-|\xi|^4+\mu2^{- j/2}|\xi|^2)} d\tau d\xi\]
	is bounded. On the other hand, since $\hat{\chi}(\tau)=0$ when $\left|\tau-|\xi|^4+\mu2^{- j/2}|\xi|^2\right|\ge10$ and $1/2\le \left||\xi|^4-\mu2^{- j/2}|\xi|^{2}\right|\le2,$ we have
	\begin{eqnarray*}
		&&\int_{\left|\tau-|\xi|^4+\mu2^{- j/2}|\xi|^2\right|\geq 10} e^{i t \tau} \fc{\hat\phi(|\xi|^4-\mu2^{- j/2}|\xi|^2)(1-\hat\chi(\tau))}{i(\tau-|\xi|^4+\mu2^{- j/2}|\xi|^2)} d\tau\\
		&=&\int_{\left|\tau-|\xi|^4+\mu2^{- j/2}|\xi|^2\right|\geq 10} e^{i t \tau} \fc{\hat\phi(|\xi|^4-\mu2^{- j/2}|\xi|^2)}{i(\tau-|\xi|^4+\mu2^{- j/2}|\xi|^2)} d\tau\\
		&=&2 \operatorname{sign}(t) e^{i t(|\xi|^4-\mu2^{- j/2}|\xi|^{2})} \hat\phi(|\xi|^4-\mu2^{- j/2}|\xi|^2) \int_{10/|t|}^{\infty} \frac{\sin \tau}{\tau} d \tau.
	\end{eqnarray*}
	This is also bounded, so that  we have proved the boundedness of $L_{j}(t, x)$.
	
	Moreover, for $1 \leq l \leq N,$ the integration by parts shows that
	\begin{eqnarray}\label{1194}
		&&x_{l} L_{j}(t, x)\notag\\
		&=&\frac{1}{(2 \pi)^{1+N}} \iint_{R^{1+N}} e^{i t \tau+i x\cdot \xi} \frac{\partial}{\partial \xi_{l}} \fc{\hat\phi(|\xi|^4-\mu2^{-j/2}|\xi|^2)(1-\hat\chi(\tau))}{\tau-|\xi|^4+\mu2^{- j/2}|\xi|^2} d\tau d\xi\notag\\
		&=&\frac{1}{(2 \pi)^{1+N}} \iint_{R^{1+N}} e^{i t \tau+i x\cdot \xi} (4|\xi|^2\xi_{l}-2\mu2^{- j/2}\xi_{l})\left\{\frac{\hat{\phi}^{\prime}\left(|\xi|^4-\mu2^{- j/2}|\xi|^{2}\right)\left(1-\hat{\chi}(\tau)\right)}{\tau-|\xi|^4+\mu2^{- j/2}|\xi|^{2}}\right.\notag\\
		&&+\left.\fc{\hat\phi(|\xi|^4-\mu2^{-j/2}|\xi|^2)(1-\hat\chi(\tau))}{(\tau-|\xi|^4+\mu2^{- j/2}|\xi|^2)^2}\right\} d \tau d \xi.
	\end{eqnarray}
	The right-hand side of (\ref{1194}) is bounded as before. Similarly, $tL_j(t,x)$ is also bounded.  Repeating this, we can obtain the desired estimate (\ref{1172}).
\end{proof}

\begin{lemma}\label{f}
	Let  $N\ge1$, $0<\theta<1$, $1\le r_0$, $\overline{r}$, $\overline{q}$, $\gamma \le \infty$, $1<q_0$, $\rho<\infty$. Assume that  $2\le\overline r\le\infty,\fc4{\overline q}-N(\fc12-\fc1{\overline r})=\frac{4}{q_0}-N(\frac{1}{2}-\frac{1}{r_0})=4(1-\theta),(\gamma,\rho)\in\Lambda_b$ and $r_0$ satisfies $\rho'\le r_0<\overline{r}$ or $\overline{r}<r_0\le\rho'$.  Then for any  $f\in l^2L^{\overline q}L^{\overline r}\cap B^{\theta}_{\gamma',2}L^{\rho'}$, we have
	\begin{equation}
		\|f\|_{l^2L^{q_0,\infty}L^{r_0}}\lesssim \|f\|_{l^2L^{\overline q}L^{\overline r}}+\|f\|_{B^{\theta}_{\gamma',2}L^{\rho'}}.\notag
	\end{equation}
\end{lemma}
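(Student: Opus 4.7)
The goal is to establish the embedding $L^{\overline q}L^{\overline r}\cap B^{\theta}_{\gamma',2}L^{\rho'}\hookrightarrow l^2L^{q_0,\infty}L^{r_0}$. The approach is to first convert the time-Besov norm into a Lorentz mixed-norm via Sobolev embedding in time, then combine it with the Chemin--Lerner norm through a multiplicative interpolation along the common scaling line $4/q-N(1/2-1/r)=4(1-\theta)$, and finally sum in the $l^2$ Chemin--Lerner structure using Littlewood--Paley theory in space.

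I would invoke the Besov--Sobolev embedding $B^{\theta}_{\gamma',2}(\R;X)\hookrightarrow L^{p,2}(\R;X)$ with $1/p=1/\gamma'-\theta$, applied with $X=L^{\rho'}(\R^N)$ (the boundary $\theta\ge 1/\gamma'$ is handled via the embedding into $L^{\infty}$), yielding $\|f\|_{L^{p,2}L^{\rho'}}\lesssim\|f\|_{B^{\theta}_{\gamma',2}L^{\rho'}}$. A direct computation using the biharmonic admissibility $4/\gamma+N/\rho=N/2$ shows $4/p-N(1/2-1/\rho')=4(1-\theta)$, so $(p,\rho')$ lies on the same scaling line as $(\overline q,\overline r)$ and $(q_0,r_0)$. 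Since $r_0$ lies between $\rho'$ and $\overline r$, there is $\alpha\in[0,1]$ with $(1/q_0,1/r_0)=(1-\alpha)(1/\overline q,1/\overline r)+\alpha(1/p,1/\rho')$.

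For each spatial Littlewood--Paley piece $f_k=\phi_k*_xf$ (and analogously $\psi*_xf$), a multiplicative H\"older/interpolation inequality in mixed Lorentz spaces gives
\[
\|f_k\|_{L^{q_0,\infty}L^{r_0}}\lesssim\|f_k\|^{1-\alpha}_{L^{\overline q}L^{\overline r}}\cdot\|f_k\|^{\alpha}_{L^{p,2}L^{\rho'}}.
\]
Squaring, summing over $k$, and applying H\"older for sequences with conjugate exponents $1/(1-\alpha)$ and $1/\alpha$ yields
\[
\sum_k\|f_k\|^2_{L^{q_0,\infty}L^{r_0}}\lesssim\Big(\sum_k\|f_k\|^2_{L^{\overline q}L^{\overline r}}\Big)^{1-\alpha}\Big(\sum_k\|f_k\|^2_{L^{p,2}L^{\rho'}}\Big)^{\alpha}.
\]
The first factor equals $\|f\|^{2(1-\alpha)}_{l^2L^{\overline q}L^{\overline r}}$; for the second, since $\rho\ge 2$ forces $\rho'\le 2$, the reverse Minkowski inequality combined with the Littlewood--Paley square-function theorem on $L^{\rho'}(\R^N)$ gives $(\sum_k\|f_k(t,\cdot)\|^2_{L^{\rho'}_x})^{1/2}\lesssim\|f(t,\cdot)\|_{L^{\rho'}_x}$, and Minkowski in time (using $p\ge 2$) plus the Sobolev step conclude $(\sum_k\|f_k\|^2_{L^{p,2}L^{\rho'}})^{1/2}\lesssim\|f\|_{B^{\theta}_{\gamma',2}L^{\rho'}}$. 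A final Young inequality $A^{1-\alpha}B^{\alpha}\le A+B$ produces the desired bound.

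The main obstacle is the pointwise interpolation inequality of the third paragraph: the two endpoints carry \emph{distinct} inner $L^r$ spaces ($L^{\overline r}$ vs.\ $L^{\rho'}$), and one endpoint has a Lorentz index $2$ inherited from the Sobolev step, so a plain scalar H\"older does not directly produce the strong inner space $L^{r_0}$ paired with the weak outer $L^{q_0,\infty}$. The cleanest remedy is to frame this as a complex interpolation of Bochner--Lorentz spaces, or to separate the two cases $\rho'\le r_0<\overline r$ and $\overline r<r_0\le\rho'$ and apply the Bernstein inequality in space on the appropriate side to reduce to a common inner exponent before interpolating. A secondary difficulty is the Minkowski swap, which requires $p\ge 2$; when $p<2$ one has to thread the Lorentz index through the Littlewood--Paley summation with additional care.
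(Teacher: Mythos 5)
Your outline is essentially the paper's proof: the authors give no argument at all here, simply deferring to Lemma 2.5 of Nakamura--Wada \cite{Na}, and your three steps (Besov--Lorentz Sobolev embedding in time, commuting with the spatial Littlewood--Paley blocks via the square function and Minkowski, then interpolating along the common scaling line) are precisely the content of that lemma adapted to the biharmonic scaling $4/q-N(1/2-1/r)=4(1-\theta)$. One clarification on the ``main obstacle'' you flag: the blockwise inequality $\|f_k\|_{L^{q_0,\infty}L^{r_0}}\lesssim\|f_k\|_{L^{\overline q}L^{\overline r}}^{1-\alpha}\|f_k\|_{L^{p,2}L^{\rho'}}^{\alpha}$ does follow from plain Hölder applied in two stages --- first in $x$, giving $\|f_k(t)\|_{L^{r_0}}\le\|f_k(t)\|_{L^{\overline r}}^{1-\alpha}\|f_k(t)\|_{L^{\rho'}}^{\alpha}$ pointwise in $t$ (this is exactly where the hypothesis that $r_0$ lies between $\rho'$ and $\overline r$ enters), then the weak-type Hölder (O'Neil) inequality in $t$ with exponents $\overline q/(1-\alpha)$ and $p/\alpha$, followed by $L^{\overline q}\hookrightarrow L^{\overline q,\infty}$ and $L^{p,2}\hookrightarrow L^{p,\infty}$; the fact that the same $\alpha$ works for both coordinates is exactly the collinearity you verified. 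By contrast, your proposed fallback via Bernstein would not work: converting $L^{\overline r}$ to $L^{r_0}$ on the block at frequency $2^k$ costs a factor $2^{kN(1/\overline r-1/r_0)}$, and none of the three norms carries a $k$-dependent weight to absorb it.
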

\begin{proof}
	The proof is an obvious adaptation of Lemma 2.5 in \cite{Na}.
\end{proof}
\begin{lemma}\label{dj}
	Let $s\in\R$, $1\le p,q\le\infty$, $\mu=0$ or $-1$, then the norm defined by
	\begin{eqnarray*}
		&&\|u\|_{\widetilde{B }_{p, q}^{s}\left(\mathbb{R}^{n}\right)}:=\left\|\left(\mathcal{F}_{\xi}^{-1}\left(\widehat{\psi}\left(|\xi|^{4}-\mu|\xi|^2\right)\right)\right) *_{x} u\right\|_{L^{p}\left(\mathbb{R}^{n}\right)} \\
		&&+\begin{cases}\left\{\sum_{j \geq 1}\left(2^{s j / 4}\left\|\left(\mathcal{F}_{\xi}^{-1}\left(\widehat{\phi}\left((|\xi|^{4}-\mu|\xi|^2)/{2^{j}}\right)\right)\right) *_{x} u\right\|_{L^{p}\left(\mathbb{R}^{n}\right)}\right)^{q}\right\}^{\frac1q},\ \text{ if }q<\infty,\\
			\sup _{j \geq 1} 2^{s j / 4}\left\|\left(\mathcal{F}_{\xi}^{-1}\left(\widehat{\phi}\left((|\xi|^{4}-\mu|\xi|^2)/{2^{j}}\right)\right)\right) *_{x} u\right\|_{L^{p}\left(\mathbb{R}^{n}\right)},\ \text{ if }q=\infty,\end{cases}
	\end{eqnarray*}
	is equivalent to the norm $\|u\|_{B^s_{p,q}}(\R^N)$ for any function $u$.
\end{lemma}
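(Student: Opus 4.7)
The plan is to show the equivalence by direct multiplier comparison: each biharmonic-adapted frequency projection $\Phi_j u := \mathcal{F}^{-1}_\xi\bigl(\hat\phi((|\xi|^4-\mu|\xi|^2)/2^j)\bigr) *_x u$ should be controlled by a finite collection of ordinary dyadic blocks $\phi_k *_x u$ with $k \sim j/4$, and conversely. The matching $j \leftrightarrow 4k$ is what makes the weights $2^{sj/4}$ and $2^{sk}$ compatible, so that the two weighted $\ell^q$-sums are comparable term-by-term up to a bounded shift.

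\textbf{Step 1 (support matching).} First I would verify that for every $j \geq 1$ the symbol $m_j(\xi) := \hat\phi((|\xi|^4-\mu|\xi|^2)/2^j)$ is Fourier-supported in an annulus $\{c_1 2^{j/4}\le|\xi|\le c_2 2^{j/4}\}$ with constants independent of $j$. Because $y\mapsto y^4-\mu y^2$ is strictly increasing on $[0,\infty)$ for $\mu\in\{0,-1\}$ with leading behavior $y^4$, the preimage of the support of $\hat\phi(\,\cdot/2^j)$ is a single interval in $|\xi|$ whose ratio of endpoints tends to $2^{1/2}$; for small $j$ the support sits in a fixed bounded annulus. Similarly $\hat\psi(|\xi|^4-\mu|\xi|^2)$ is supported in $\{|\xi|\le C\}$.

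\textbf{Step 2 (uniform $L^1$ kernel bound).} The central estimate is $\|\mathcal{F}^{-1}_\xi(m_j)\|_{L^1(\R^N)}\le C$ uniformly in $j\ge1$. Rescaling $\xi=2^{j/4}\eta$ turns $m_j$ into $g_j(\eta):=\hat\phi(|\eta|^4-\mu 2^{-j/2}|\eta|^2)$, a smooth radial function supported in a fixed annulus in $\eta$ bounded away from the origin (by Step 1). Since $2^{-j/2}\in(0,1/2]$, every $C^M$-norm of $g_j$ is bounded uniformly in $j$, hence $\mathcal{F}^{-1}(g_j)$ is Schwartz with uniform Schwartz seminorms, and rescaling back preserves the $L^1$ norm. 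The low-frequency kernel from $\hat\psi(|\xi|^4-\mu|\xi|^2)$ has compact Fourier support and is handled identically.

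\textbf{Step 3 (norm comparison).} With the $L^1$-bound and support matching I would choose a fixed-size fattening $\tilde\chi_j$ of the standard Littlewood--Paley cutoff covering $\{|\xi|\sim 2^{j/4}\}$, so that $m_j=m_j\,\hat{\tilde\chi}_j$. Young's inequality gives
\[
\|\Phi_j u\|_{L^p} \;\lesssim\; \sum_{k\in S_j}\|\phi_k *_x u\|_{L^p},
\]
where $S_j$ is a bounded set of integers $k\approx j/4$, with constants independent of $j$. Taking the weighted $\ell^q$-sum and using $2^{sj/4}\sim 2^{sk}$ for $k\in S_j$ yields $\|u\|_{\widetilde B^s_{p,q}}\lesssim\|u\|_{B^s_{p,q}}$; the low-frequency part is bounded by $\|\psi *_x u\|_{L^p}$ plus finitely many low-$k$ blocks. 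The reverse inequality is obtained symmetrically: each standard block $\phi_k *_x u$ is expressed via multipliers (bounded uniformly in $k$ by the same rescaling argument applied to $\hat\phi(\cdot/2^k)$ against a finite sum of $m_j$'s with $j\approx 4k$) of the biharmonic blocks.

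\textbf{Main obstacle.} The main technical point is the uniform multiplier bound in Step 2. One must verify that the perturbation $-\mu 2^{-j/2}|\eta|^2$ does not destroy the annular support of $g_j$ or inflate its derivatives as $j$ varies; the hypothesis $j\ge 1$ (so $2^{-j/2}\le 2^{-1/2}$) and the monotonicity of $y^4-\mu y^2$ are exactly what is needed to prevent such degeneration, and this is where the low-frequency projection $\hat\psi(|\xi|^4-\mu|\xi|^2)$ must absorb the non-uniform region.
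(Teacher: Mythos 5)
Your proposal is correct and follows essentially the same route as the paper: match the biharmonic block at level $j$ with finitely many standard Littlewood--Paley blocks at level $\approx j/4$ using the annular support of $\hat\phi((|\xi|^4-\mu|\xi|^2)/2^j)$, and conclude by Young's inequality with a uniform $L^1$ bound on the rescaled kernels $\hat\phi(|\eta|^4-\mu 2^{-j/2}|\eta|^2)$. Your Step 2 in fact spells out the uniform kernel bound that the paper leaves to the cited reference (Lemma 2.3 of \cite{Na}), so nothing is missing.
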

\begin{proof}
	For the sake of convenience and completeness, we briefly sketch the proof. Indeed, readers seeking a fuller treatment of certain details may consult Lemma 2.3 of \cite{Na}.
	
	Firstly, we show that $\|u\|_{B_{p,q}^{s}\left(\mathbb{R}^{N}\right)} \lesssim\|u\|_{\widetilde{B }_{p,q}^{s}\left(\mathbb{R}^{N}\right)}.$  Since
	$$
	\sum_{k=-6}^7 \widehat{\phi}\left((|\xi|^{4}-\mu|\xi|^2)/{2^{4j+k}}\right)=1
	$$
	on the support of $\widehat{\phi}\left(|\cdot|/2^j\right),$ it follows from  Young's inequality that
	\begin{eqnarray*}
		2^{js}\left\|\phi_{j} *_{x} u\right\|_{L^{p}\left(\mathbb{R}^{N}\right)} &=&2^{js}\left\|\left(\mathcal{F}_{\xi}^{-1} \widehat{\phi}\left({|\xi|}/{2^{j}}\right)\right) *_{x} u\right\|_{L^{p}\left(\mathbb{R}^{N}\right)} \\
		& \lesssim& 2^{js}\sum_{k=-4}^{5}\left\|\left(\mathcal{F}_{\xi}^{-1}\left(\widehat{\phi}\left((|\xi|^{4}-\mu|\xi|^2)/{2^{4j+k}}\right)\right)\right) *_{x} u\right\|_{L^{p}\left(\mathbb{R}^{N}\right)}\\
		&\lesssim& \sum_{k=-4}^{5}2^{\fc{ls}4}\left\|\left(\mathcal{F}_{\xi}^{-1}\left(\widehat{\phi}\left((|\xi|^{4}-\mu|\xi|^2)/{2^{l}}\right)\right)\right) *_{x} u\right\|_{L^{p}\left(\mathbb{R}^{N}\right)},
	\end{eqnarray*}
	where $l=4j+k$. Similar inequality for  the low frequency part also holds.  Taking  $l^q(\mathbb{Z})$ norm,   we  obtain the desired inequality $\|u\|_{B_{p,q}^{s}\left(\mathbb{R}^{N}\right)} \lesssim\|u\|_{\widetilde{B}_{p,q}^{s}\left(\mathbb{R}^{N}\right)}$.
	
	Next, we show the opposite inequality. Note that  $\sum_{k=-4}^{4}\hat\phi({|\xi|}/{2^{[ j/4]+k}})=1$ on the support of $\hat\phi((|\xi|^{4}-\mu|\xi|^2)/{2^j})$, so that
	\begin{eqnarray*}
		&&2^{{js}/4}\left\|\mathcal{F}_{\xi}^{-1}\left(\widehat{\phi}\left((|\xi|^{4}-\mu|\xi|^2)/{2^{j}}\right)\right) *_{x} u\right\|_{L^{p}\left(\mathbb{R}^{N}\right)} \\
		&\lesssim& 2^{{js}/4}\sum_{k=-2}^{2}\left\|\mathcal{F}_{\xi}^{-1}\left(\widehat{\phi}\left({|\xi|}/{2^{[ j/4]+k}}\right)\right) *_{x} u\right\|_{L^{p}\left(\mathbb{R}^{N}\right)}\\
		&\lesssim& \sum_{k=-2}^{2}2^{ls}\left\|\mathcal{F}_{\xi}^{-1}\left(\widehat{\phi}\left(|\xi|/{2^{l}}\right)\right) *_{x} u\right\|_{L^{p}\left(\mathbb{R}^{N}\right)},
	\end{eqnarray*}
	where $l=[ j/4]+k$. Since the low frequency parts are easier to treat, we can take  $l^q(\mathbb{Z})$ norm to  obtain the desired inequality $\|u\|_{\widetilde{B}_{p,q}^{s}\left(\mathbb{R}^{N}\right)}\lesssim \|u\|_{B_{p,q}^{s}\left(\mathbb{R}^{N}\right)}.$
\end{proof}

\begin{proof}[\textbf{Proof of Proposition \ref{p1}}] We use the natation $\phi_{ j/4}=\mathcal{F}_\xi^{-1}\left(\hat\phi_j(|\xi|^4-\mu|\xi|^2)\right)$. This is an abuse of symbol, but no confusion is likely to arise. Under this notation, we obtain the following equivalence from Lemma \ref{dj},
	\begin{equation}\label{1271}
		\|u\|_{{B}_{p, q}^{s}}\approx\left\|\left(\mathcal{F}_{\xi}^{-1}\left(\widehat{\psi}\left(|\xi|^{4}-\mu|\xi|^2\right)\right)\right) *_{x} u\right\|_{L^{p}}
		+\left\{\sum_{j=1}^\infty\left(2^{s j / 4}\left\| \phi_{ j/4}*_{x} u\right\|_{L^{p}}\right)^{q}\right\}^{\frac1q}
	\end{equation}
	with trivial modification if $q=\infty$. Then we claim that for any $f:\R^N\rightarrow \mathbb{C}$, we have
	\begin{equation}\label{a1}
		\phi_{j} *_{t}e^{it(\Delta^2+\mu\Delta)}f=e^{it(\Delta^2+\mu\Delta)}\phi_{ j/4} *_{x} f.
	\end{equation}
	In fact, by Fourier transform
	\begin{eqnarray}
		\left(\phi_j*_te^{it(\Delta^2+\mu\Delta)}f\right)\hat{\phantom{f}}(t,\xi)&=&\int_{-\infty }^{\infty } \phi_j(\tau)e^{i(t-\tau)\left(\left|\xi\right|^{4}-\mu \left|\xi\right|^{2}\right)}\hat f(\xi)\mathrm{d}\tau \notag\\
		&=& e^{it \left(\left|\xi\right|^{4}-\mu \left|\xi\right|^{2}\right)}\hat f(\xi)\hat\phi_j \left(\left|\xi\right|^{4}-\mu \left|\xi\right|^{2}\right).\notag
	\end{eqnarray}
	Taking Fourier inverse transform, we obtain  (\ref{a1}).
	
	We now resume the proof of Proposition \ref{p1}. We seperate the proof into three parts.\\
	\textbf{The proof of the inequality (\ref{i1})}.  Using the same method as that used to prove Corollary 2.3.9 in \cite{Ca9}, we deduce that $e^{it(\Delta^2+\mu\Delta)}u_0\in C\left(\R,H^{4\theta }\right)$ and
	\begin{equation}\label{1166}
		\left\|e^{it(\Delta^2+\mu\Delta)}u_0\right\|_{L^qB^{4\theta}_{r,2}}\lesssim \left\|u_0\right\|_{H^{4\theta}}.
	\end{equation}
	It remains to estimate $\left\|e^{it(\Delta^2+\mu\Delta)}u_0\right\|_{B^{\theta-\sigma/4}_{q,2}B^\sigma_{r,2}}$.
	Applying (\ref{a1}) and Strichartz's estimate (\ref{sz}), we conclude that
	\begin{eqnarray}
		&&\sum_{j=1}^{\infty}  \sum_{k=1}^{\infty} 2^{(2 \theta-\sigma/2) j+\sigma k/2}\left\|\phi_{j} *_{t} \phi_{ k/4} *_{x} e^{it(\Delta^2+\mu\Delta)}u_0\right\|_{L^{q}L^{r}}^{2} \notag\\
		& \lesssim& \sum_{j=1}^{\infty} \sum_{k=1}^{\infty} 2^{(2 \theta-\sigma/2) j+\sigma k/2 }\left\|\phi_{ j/4} *_{x} \phi_{k/4} * u_0\right\|_{L^{2}}^{2} \notag\\
		& \lesssim& \sum_{k=1}^{\infty} 2^{2 \theta k}\left\|\phi_{ k/4} *_{x} u_0\right\|_{L^{2}}^{2} \lesssim\left\|u_0\right\|_{H^{4 \theta}}^{2},\notag
	\end{eqnarray}
	where we  used (\ref{1271}) and the fact $\hat{\phi}_{j}\left(|\xi|^4-\mu|\xi|^{2}\right) \hat{\phi}_{k}\left(|\xi|^4-\mu|\xi|^{2}\right)=0$ whenever $|j-k|\ge 2$. Since the low frequency parts are easier to treat, we obtain
	$$
	\left\|e^{it(\Delta^2+\mu\Delta)}u_0\right\|_{B_{q, 2}^{\theta-\sigma/4}B_{r, 2}^{\sigma}} \lesssim\left\|u_0\right\|_{H^{4 \theta}}.
	$$
	This inequality together with (\ref{1166}) yields (\ref{i1}).

	\noindent\textbf{The proof of the inequality (\ref{i2})}.
	Taking Fourier transform, we get
	\begin{eqnarray}\label{9206}
		(Gf)\hat{\phantom{f}}(t,\xi)&=&\int_0^te^{i(t-s)(|\xi|^4-\mu|\xi|^2)}\hat f(s,\xi)ds \notag\\
		&=&\fc1{2\pi}\int_0^te^{i(t-s)(|\xi|^4-\mu|\xi|^2)}ds \int_{-\infty}^{\infty}\widetilde f(\tau,\xi)e^{i\tau s}d\tau\notag\\
		&=&\int_{-\infty}^{\infty}\fc{e^{it\tau}-e^{it(|\xi|^4-\mu|\xi|^2)}}{2\pi i(\tau-|\xi|^4+\mu|\xi|^2)}\widetilde f(\tau,\xi)d \tau.
	\end{eqnarray}
	We then multiply  (\ref{9206}) with $\hat\phi_k(|\xi|^4-\mu|\xi|^2)$ to obtain
	\begin{eqnarray}\label{1272}
		&&\hat\phi_k(|\xi|^4-\mu|\xi|^2)(Gf)\hat{\phantom{f}}(t,\xi)\notag\\
		&=&\int_{-\infty}^{\infty}\fc{e^{it\tau}\hat\phi_k(|\xi|^4-\mu|\xi|^2)\hat\chi_k(\tau)}{{2\pi i(\tau-|\xi|^4+\mu|\xi|^2)}}\widetilde f(\tau,\xi) d\tau\notag\\
		&&+\int_{-\infty}^{\infty}\fc{e^{it\tau}\hat\phi_k(|\xi|^4-\mu|\xi|^2)(1-\hat\chi_k(\tau))}{2\pi i(\tau-|\xi|^4+\mu|\xi|^2)}\cdot \hat\chi_k(|\xi|^4-\mu|\xi|^2)\widetilde f(\tau,\xi) d\tau\notag\\
		&&-\int_{-\infty}^{\infty}\fc{e^{it(|\xi|^4-\mu|\xi|^2)}}{{2\pi i(\tau-|\xi|^4+\mu|\xi|^2)}}\hat\phi_k(|\xi|^4-\mu|\xi|^2)\hat\chi_k(\tau)\widetilde f(\tau,\xi) d\tau\notag\\
		&&-\int_{-\infty}^{\infty}\fc{e^{it(|\xi|^4-\mu|\xi|^2)}\hat\phi_k(|\xi|^4-\mu|\xi|^2)(1-\hat\chi_k(\tau))}{2\pi i(\tau-|\xi|^4+\mu|\xi|^2)}\cdot \hat\chi_k(|\xi|^4-\mu|\xi|^2)\widetilde f(\tau,\xi) d\tau
	\end{eqnarray}
	where we used the face that $\hat\chi_k=1$ on the support of $\hat\phi_k$. Since
	\begin{eqnarray*}
		\mathcal{F}_\tau^{-1}\{\fc1{i(\tau-|\xi|^4+\mu|\xi|^2)}\}(t)&=&\fc1{2\pi}\int_{-\infty}^{\infty}e^{it\tau}\fc1{i(\tau-|\xi|^4+\mu|\xi|^2)}d\tau\\
		&=&\fc{1}{2\pi}e^{it(|\xi|^4-\mu|\xi|^2)}\int_{-\infty}^{\infty}\fc{e^{it\tau}}{i\tau} d\tau\\
		&=&\fc12\text{sign}(t)e^{it(|\xi|^4-\mu|\xi|^2)},
	\end{eqnarray*}
	it follows  from (\ref{1272}) that
	\begin{eqnarray}\label{1168}
		\phi_{ k/4}*_x(Gf)&=&\fc12\int_{-\infty}^{\infty}\text{sign}(t-s)e^{i(t-s)(\Delta^2+\mu\Delta)}(\phi_{ k/4}*_x\chi_k*_tf)(s)ds\notag\\
		&&+K_k*_{t,x}\chi_{ k/4}*_xf\notag\\
		&&-\fc12e^{it(\Delta^2+\mu\Delta)}\int_{-\infty}^{\infty}\text{sign}(-s)e^{is(\Delta^2+\mu\Delta)}(\phi_{ k/4}*_x\chi_k*_tf)(s) ds \notag\\
		&&-e^{it(\Delta^2+\mu\Delta)}\{K_k*_{t,x}\chi_{k/4}*_xf\}|_{t=0},
	\end{eqnarray}
	where $K_j$ is the function  defined in Lemma \ref{L1}. Applying (\ref{1168}), Strichartz's estimates (\ref{sz})--(\ref{SZ}), we conclude that
	\begin{equation}\label{9201}
		\|\phi_{k/4}*_x(Gf)\|_{L^qL^r} \lesssim \|\chi_k*_tf\|_{L^{\gamma'}L^{\rho'}}+\|K_k*_{t,x}\chi_{ k/4}*_xf\|_{L^qL^r\cap L^\infty L^2}.
	\end{equation}
	Next, we estimate $\|K_k*_{t,x}\chi_{ k/4}*_xf\|_{L^qL^r\cap L^\infty L^2}$. Let
	\begin{gather}\notag
		r_0=\begin{cases}\overline r,\qquad\text{if }1\le\overline r\le2,\\
			2, \qquad\text{if }\overline r\ge2,\end{cases} \qquad r_1=\begin{cases}\overline r,\qquad\text{if }1\le\overline r\le r,\\
			2, \qquad\text{if }\overline r\ge r.\end{cases}
	\end{gather}
	We then define $q_0,\widetilde{r_0},\widetilde{q_0}$, $q_1,\widetilde{r_1},\widetilde{q_1}$ such that $\fc4{q_0}-N(\fc12-\fc1{r_0})=4(1-\theta)$, $1+\fc12=\fc1{\widetilde{r_0}}+\fc1{r_0},1=\fc1{\widetilde {q_0}}+\fc1{q_0}$ and  $\fc4{q_1}-N(\fc12-\fc1{r_1})=4(1-\theta)$, $1+\fc1r=\fc1{\widetilde{r_1}}+\fc1{r_1},1+\fc1q=\fc1{\widetilde {q_1}}+\fc1{q_1}$. Then it is easy to check that $1\le r_0,q_0,\widetilde{r_0},\widetilde{q_0}$, $r_1, q_1,\widetilde{r_1},\widetilde{q_1}\le \infty $ and  $\fc4{\widetilde{q_0}}-N(1-\fc1{\widetilde{r_0}})=\fc4{\widetilde{q_1}}-N(1-\fc1{\widetilde{r_1}})=4\theta$. From Young's inequality and Lemma \ref{L1}, we have
	\begin{equation}\label{1167}
		\|K_k*_{t,x}\chi_{ k/4}*_xf\|_{L^\infty L^2}\lesssim \|K_k\|_{L^{\widetilde{q_0 },1}L^{\widetilde{r_0 }}}\|\chi_{ k/4}*_xf\|_{L^{q_0,\infty} L^{r_0}}\lesssim2^{-k\theta}\|\chi_{ k/4}*_xf\|_{L^{q_0,\infty} L^{r_0}},
	\end{equation}
	and
	\begin{equation}\label{9213}
		\|K_k*_{t,x}\chi_{ k/4}*_xf\|_{L^q L^r}\lesssim \|K_k\|_{L^{\widetilde {q_1},1}L^{\widetilde {r_1}}}\|\chi_{ k/4}*_xf\|_{L^{q_1,\infty} L^{r_1}}\lesssim2^{-k\theta}\|\chi_{ k/4}*_xf\|_{L^{q_1,\infty} L^{r_1}}.
	\end{equation}	
	Estimates (\ref{9201}), (\ref{1167}) and (\ref{9213}) imply
	\begin{equation}\label{1241}
		\left\|\phi_{k/4}*_x\left(Gf\right)\right\|_{L^qL^r}\lesssim \left\|\chi_k*_tf\right\|_{L^{\gamma '}L^{\rho'}}+2^{-k\theta }\left\|\chi_{k/4}*_xf\right\|_{L^{q_0,\infty} L^{r_0}\cap L^{q_1,\infty }L^{r_1}}.
	\end{equation}
	Similarly,
	\begin{eqnarray}\label{9207}
		&&\|\mathcal{F}_\xi^{-1}(\hat\psi(|\xi|^4-\mu|\xi|^2))*_x(Gf)\|_{L^qL^r\cap L^\infty L^2}\notag\\
		&\lesssim&\|\chi_0*_tf\|_{L^{\gamma'}L^{\rho'}}+\|\mathcal{F}^{-1}_\xi\left(\hat\chi_0(|\xi|^4-\mu|\xi|^2)\right)*_xf\|_{L^{q_0,\infty} L^{r_0}\cap L^{q_1,\infty} L^{r_1}}.
	\end{eqnarray}
	It now follows from  (\ref{1241}), (\ref{9207}) and (\ref{1271}) that
	$$
	\|Gf\|_{ L^{q}B^{4\theta}_{r,2}} \lesssim \|f\|_{B^{\theta}_{\gamma',2}L^{\rho'}}+\|f\|_{l^2L^{q_0,\infty} L^{r_0}}+\|f\|_{l^2 L^{q_1,\infty} L^{r_1}}
	\lesssim \|f\|_{B^{\theta}_{\gamma',2}L^{\rho'}}+\|f\|_{l^2L^{\overline q} L^{\overline r}},
	$$
	where we used Lemma \ref{f} when $\overline{r}\ge2$.
	This proves (\ref{i2}).
	
	\noindent\textbf{The proof of the inequality (\ref{i3})}.	By the definition of Besov norm and (\ref{1271}), we have
	\begin{equation}\label{1192}
		\|Gf\|_{B_{q, 2}^{\theta-\sigma/4}B_{r, 2}^{\sigma}} \lesssim\|Gf\|_{L^{q}B_{r, 2}^{\sigma}}+\|Gf\|_{B_{q, 2}^{\theta}L^{r}}+J,
	\end{equation}
	where $J=\left\{\sum_{j=1}^{\infty} \sum_{k=1}^{\infty} 2^{(2 \theta-\sigma/2) j+\sigma k/2}\left\|\phi_{j} *_{t} \phi_{ k/4} *_{x} (Gf)\right\|_{L^{q}L^{r}}^2\right\}^{{1}/{2}}$.
	Since $\|Gf\|_{L^{q}B_{r, 2}^{\sigma}}$ can be controlled by (\ref{i2}), it suffices to estimate the last two terms in (\ref{1192}).

	We first estimate $\|Gf\|_{B_{q, 2}^{\theta}L^{r}}$. Since  $\phi_j*_te^{ita}=e^{ita}\hat\phi_j(a)$, for any $a\in\R$,  it follows from (\ref{9206}) that
	\begin{eqnarray}
		\phi_j*_t(\widehat{Gf})
		&=&\int_{-\infty}^{\infty}\fc{e^{it\tau}\hat\phi_j(\tau)}{2\pi i(\tau-|\xi|^4+\mu|\xi|^2)}\widetilde f(\tau,\xi) d\tau\notag\\
		&&-\int_{-\infty}^{\infty}\fc{e^{it(|\xi|^4-\mu|\xi|^2)}\hat\phi_j(|\xi|^4-\mu|\xi|^2)\hat\chi_j(\tau)}{{2\pi i(\tau-|\xi|^4+\mu|\xi|^2)}}\widetilde f(\tau,\xi) d\tau\notag\\
		&&-\int_{-\infty}^{\infty}\fc{e^{it(|\xi|^4-\mu|\xi|^2)}\hat\phi_j(|\xi|^4-\mu|\xi|^2)(1-\hat\chi_j(\tau))}{2\pi i(\tau-|\xi|^4+\mu|\xi|^2)}\notag\\
		&&\qquad\quad\cdot \hat\chi_j(|\xi|^4-\mu|\xi|^2)\widetilde f(\tau,\xi) d\tau,\notag
	\end{eqnarray}
	so that
	\begin{eqnarray}\label{1191}
		\phi_j*_t(Gf)&=&\fc12\int_{-\infty}^{\infty}\text{sign}(t-s)e^{i(t-s)(\Delta^2+\mu\Delta)}(\phi_j*_tf)(s)ds\notag\\
		&&-\fc12e^{it(\Delta^2+\mu\Delta)}\int_{-\infty}^{\infty}\text{sign}(-s)e^{is(\Delta^2+\mu\Delta)}(\phi_{j/4}*_x\chi_j*_tf)(s) ds\notag \\
		&&-\fc12e^{it(\Delta^2+\mu\Delta)}\{K_j*_{t,x}\chi_{ j/4}*_xf\}|_{t=0}. \notag
	\end{eqnarray}
	This together with  Strichartz's estimates (\ref{sz})--(\ref{SZ}) and (\ref{1167}) implies
	\begin{equation}	\|\phi_j*_t(Gf)\|_{L^qL^r}\lesssim\|\phi_j*_tf\|_{L^{\gamma'}L^{\rho'}}+\|\chi_j*_tf\|_{L^{\gamma'}L^{\rho'}}+2^{-j\theta}\left\|\chi_{j/4}*_xf\right\|_{L^{q_0,\infty }L^{r_0}}.\notag
	\end{equation}
	Similarly,
	\begin{equation}
		\|\psi*_t(Gf)\|_{L^qL^r}\lesssim\|\psi*_tf\|_{L^{\gamma'}L^{\rho'}}+\|\chi_0*_tf\|_{L^{\gamma'}L^{\rho'}}+\|\mathcal{F}^{-1}_\xi \left\{\hat\chi_0(|\xi|^4-\mu|\xi|^2)\right\}*_xf\|_{L^{q_0,\infty} L^{r_0}}.\notag
	\end{equation}
	Combining the  above two inequalities, we obtain
	$$
	\|Gf\|_{B^{\theta}_{q,2}L^{r}} \lesssim \|f\|_{B^{\theta}_{\gamma',2}L^{\rho'}}+\|f\|_{l^2L^{q_0,\infty} L^{r_0}}
	\lesssim \|f\|_{B^{\theta}_{\gamma',2}L^{\rho'}}+\|f\|_{l^2L^{\overline q} L^{\overline r}},
	$$
	where we used Lemma \ref{f} when $\overline{r}\ge2$.
	
	Next, we estimate $J$. Note that by (\ref{1168}) and (\ref{a1})
	\begin{eqnarray*}
		&&	\phi_j*_t\phi_{k/4}*_x(Gf)\\
		&=&\fc12\int_{-\infty}^{\infty}\text{sign}(t-s)e^{i(t-s)(\Delta^2+\mu\Delta)}(\phi_j*_t\phi_{ k/4}*_x\chi_k*_tf)(s)ds\\
		&&+K_k*_{t,x}\phi_j*_t\chi_{ k/4}*_xf\\
		&&-\fc12e^{it(\Delta^2+\mu\Delta)}\int_{-\infty}^{\infty}\text{sign}(-s)e^{is(\Delta^2+\mu\Delta)}(\phi_{ j/4}*_x\phi_{ k/4}*_x\chi_k*_tf)(s) ds \\
		&&-e^{it(\Delta^2+\mu\Delta)}\{K_k*_{t,x}\phi_{ j/4}*_x\chi_{k/4}*_xf\}|_{t=0},\notag\\
		&:=&\uppercase\expandafter{\romannumeral1}_{j,k}+\uppercase\expandafter{\romannumeral2}_{j,k}+\uppercase\expandafter{\romannumeral3}_{j,k}+\uppercase\expandafter{\romannumeral4}_{j,k},
	\end{eqnarray*}
	so that
	\begin{eqnarray*}
		J &\lesssim &\left(\sum_{j=1}^{\infty} \sum_{k=1}^{\infty} 2^{(2 \theta-\sigma/2) j+\sigma k/2}\left\|\uppercase\expandafter{\romannumeral1}_{j,k}\right\|_{L^{q}L^{r}}^{2}\right)^{\frac12}
		+\left(\sum_{j=1}^{\infty} \sum_{k=1}^{\infty} 2^{(2 \theta-\sigma/2) j+\sigma k/2}\left\|\uppercase\expandafter{\romannumeral2}_{j,k}\right\|_{L^{q}L^{r}}^{2}\right)^{\frac12}\notag\\
		&&+\left(\sum_{j=1}^{\infty} \sum_{k=1}^{\infty} 2^{(2 \theta-\sigma/2) j+\sigma k/2}\left\|\uppercase\expandafter{\romannumeral3}_{j,k}\right\|_{L^{q}L^{r}}^{2}\right)^{\frac12} +\left(\sum_{j=1}^{\infty} \sum_{k=1}^{\infty} 2^{(2 \theta-\sigma/2) j+\sigma k/2}\left\|\uppercase\expandafter{\romannumeral4}_{j,k}\right\|_{L^{q}L^{r}}^{2}\right)^{\frac12}\notag\\
		&:=& J_{1}+J_{2}+J_{3}+J_{4}.\notag
	\end{eqnarray*}
	We first consider  $J_1,J_3$. By Strichartz's estimates  (\ref{sz})-(\ref{SZ}), we get
	\begin{equation}
		\|\uppercase\expandafter{\romannumeral1}_{j,k}\|_{L^qL^r} \lesssim \|\phi_{j}*_t\chi_k*_tf\|_{L^{\gamma'}L^{\rho'}}
		,\quad\|\uppercase\expandafter{\romannumeral3}_{j,k}\|_{L^qL^r}\lesssim\|\phi_{ j/4}*_x\phi_{ k/4}*_x\chi_k*_tf\|_{L^{\gamma'}L^{\rho'}}.\notag
	\end{equation}
	Since  $\phi_j*_t\chi_k=0$ whenever $|j-k|\ge4$, we have
	\begin{eqnarray}
		J_1^2&\lesssim&\sum_{j=1}^\infty\sum_{k=1}^\infty2^{(2\theta-\sigma/2)j+\sigma/2 k}\|\phi_{j}*_t\chi_k*_tf\|_{L^{\gamma'}L^{\rho'}}^2\notag\\
		&\lesssim&\sum_{j=1}^{\infty}2^{2\theta j}\|\phi_j*_tf\|_{L^{\gamma'}L^{\rho'}}^2\lesssim\|f\|_{B^\theta_{\gamma',2}L^{\rho'}}^2.\notag
	\end{eqnarray}
	Similarly, we have
	$J_3\lesssim\|f\|_{B^\theta_{\gamma',2}L^{\rho'}}.$
	
	For $J_4$, we deduce from (\ref{SZ}) and  (\ref{1167}) that
	\begin{equation}
		\|\uppercase\expandafter{\romannumeral4}_{j,k}\|_{L^qL^r}\lesssim2^{-k\theta}\|\phi_{ j/4}*_x\chi_{ k/4}*_xf\|_{L^{q_0,\infty} L^{r_0}}.\notag
	\end{equation}
	Since $\phi_{ j/4}*_x\chi_{ k/4}=0$ whenever $|j-k|\ge4$,  we conclude that
	\begin{equation}
		J_4\lesssim\|f\|_{l^2L^{q_0,\infty}L^{r_0}}\lesssim \|f\|_{B^{\theta}_{\gamma',2}L^{\rho'}}+\|f\|_{l^2L^{\overline q} L^{\overline r}},\notag
	\end{equation}
	where we used Lemma \ref{f} when $\overline{r}\ge2$.
	
	Our final step is to estimate $J_2$.
	Similarly to (\ref{9213}), we have
	\begin{equation}\label{1169}
		\|\uppercase\expandafter{\romannumeral2}_{j,k}\|_{L^qL^r} \lesssim 2^{-k\theta }\|\phi_j*_t\chi_{ k/4}*_xf\|_{L^{q_1,\infty}L^{r_1}}.
	\end{equation}
	On the other hand, by Young's inequality
	\begin{equation}\label{11610}
		\|\uppercase\expandafter{\romannumeral2}_{j,k}\|_{L^qL^r} \lesssim \|\phi_j*_tf\|_{L^{\gamma'}L^{\rho'}}.
	\end{equation}
	It follows from (\ref{1169}) and (\ref{11610}) that
	\begin{eqnarray*}
		J_{2} &\lesssim &\left(\sum_{j=1}^{\infty} \sum_{k=1}^{j} 2^{(2\theta-\sigma/2) j+\sigma k/2}\left\|\phi_{j} *_{t} f\right\|_{L^{\gamma'}L^{\rho'}}^{2}\right)^{1/2} \\
		&&+\left(\sum_{j=1}^{\infty} \sum_{k=j+1}^{\infty} 2^{(2 \theta-\sigma/2) (j-k)}\left\|\phi_{j} *_{t} \chi_{ k/4} *_{x} f\right\|_{L^{q_1,\infty}L^{r_1}}^{2}\right)^{1/2} \\
		&:=& J_{2,1}+J_{2,2}.
	\end{eqnarray*}
	Since $\sum_{k=1}^{j} 2^{\sigma k/2} \lesssim 2^{\sigma j/2},$ we have $J_{2,1} \lesssim\|f\|_{B_{\gamma^{\prime}, 2}^{\theta}L^{\rho^{\prime}}}$.  To estimate $J_{2,2}$, we interchange the order of the summation to obtain
	\begin{eqnarray*}
		J_{2,2}^{2} &=&\sum_{k=1}^{\infty} \sum_{j=1}^{k-1} 2^{(2 \theta-\sigma/2) (j-k)}\left\|\phi_{j} *_{t} \chi_{ k/4} *_{x} f\right\|_{L^{q_{1}, \infty}L^{r_{1}}}^{2} \\
		& \lesssim & \sum_{k=1}^{\infty}\left\|\chi_{ k/4} *_{x} f\right\|_{L^{q_1,\infty }L^{r_1}}^2 \lesssim\|f\|_{l^2L^{q_1,\infty }L^{r_1}}^2.
	\end{eqnarray*}
	Collecting these  estimates, we obtain
	$$
	J  \lesssim \|f\|_{B_{\gamma^{\prime}, 2}^{\theta}L^{\rho^{\prime}}}+\left\|f\right\|_{l^2L^{\overline{q}}L^{\overline{r}}}+\|f\|_{l^2L^{q_1,\infty }L^{r_1}}
	\lesssim |f\|_{B_{\gamma^{\prime}, 2}^{\theta}L^{\rho^{\prime}}}+\left\|f\right\|_{l^2L^{\overline{q}}L^{\overline{r}}},
	$$
	where we used Lemma \ref{f} when $\overline{r}\ge r$.
	This finishes the proof of (\ref{i3}).
	
	Finally,  the continuity of $Gf$ in time follows from the density argument. This completes the proof of Proposition \ref{p1}.
\end{proof}

\section{Proof of Theorem \ref{T1}}\label{s4}
In this section, we prove Theorem \ref{T1}. Firstly, we recall  two lemmas that we will need to complete the contraction argument.

\begin{lemma}[\cite{G}, Lemma 3.4]\label{l1}
	Assume $\alpha>0,0\le s<\alpha+1$, $f\in\mathcal{C}(\alpha)$.
	and  $1<p, r, \rho<\infty$ satisfy  $\fc1p=\fc\alpha \rho+\fc1r$. Then for any $u\in L^\rho\cap B^s_{r,2}$, we have\\
	(i) if $s\in\mathbb{Z}$, $$	\|f(u)\|_{ H^{s,p}}\lesssim\|u\|_{L^\rho}^\alpha\|u\|_{H^{s,r}},$$\\
	(ii) if $s\notin \mathbb{Z}$, $$	\|f(u)\|_{ B^s_{p,2}}\lesssim\|u\|_{L^\rho}^\alpha\|u\|_{B^s_{r,2}}.$$
\end{lemma}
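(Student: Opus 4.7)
The plan is to reduce this Moser-type inequality to a chain-rule decomposition combined with H\"older's inequality and interpolation, exploiting the vanishing conditions $f^{(j)}(0)=0$ for $0\le j\le[\alpha]$ encoded in the class $\mathcal{C}(\alpha)$. By Taylor's formula and the bound on $f^{([\alpha]+1)}$, one obtains the pointwise estimates $|f^{(k)}(u)|\lesssim|u|^{\alpha+1-k}$ for every $0\le k\le[\alpha]+1$; these are the main analytic input, together with the generalized Lipschitz bound $|f^{(k)}(u)-f^{(k)}(v)|\lesssim (|u|+|v|)^{\alpha-k}|u-v|$ obtained by iterating the second estimate in \eqref{fu}.

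For case (i), I would argue by induction on $s$. The base case $s=0$ is immediate: $|f(u)|\lesssim|u|^{\alpha+1}$ together with the relation $\fc1p=\fc\alpha\rho+\fc1r$ yields $\|f(u)\|_{L^p}\lesssim\|u\|_{L^\rho}^\alpha\|u\|_{L^r}$ by H\"older. For $1\le s\le\alpha$ with $s\in\mathbb{Z}$, the Fa\`a di Bruno formula expands $\partial^\gamma f(u)$ with $|\gamma|=s$ into a finite sum of terms of the form $f^{(k)}(u)\,\partial^{\gamma_1}u\cdots\partial^{\gamma_k}u$ with $k\le s$, $\sum|\gamma_i|=s$, $|\gamma_i|\ge1$. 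Using $|f^{(k)}(u)|\lesssim|u|^{\alpha+1-k}$, a multi-factor H\"older inequality, and Gagliardo--Nirenberg interpolation to redistribute the total $s$ derivatives at the cost of a single factor $\|u\|_{H^{s,r}}$ and powers of $\|u\|_{L^\rho}$, each term is bounded by $\|u\|_{L^\rho}^\alpha\|u\|_{H^{s,r}}$.

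For case (ii) with $s\notin\mathbb{Z}$, the natural route is the fractional-difference characterization of $B^s_{p,2}$. Writing $s=[s]+\sigma$ with $\sigma\in(0,1)$, one has
\[
\|f(u)\|_{B^s_{p,2}}^2 \approx \|f(u)\|_{H^{[s],p}}^2 + \int_{\R^N}\fc{\|\Delta_h\partial^{[s]}f(u)\|_{L^p}^2}{|h|^{N+2\sigma}}\,dh,
\]
with $\Delta_h$ the first-order difference operator. The first summand is handled exactly as in (i). For the second, expanding $\partial^{[s]}f(u)$ by Fa\`a di Bruno reduces matters to two kinds of contributions: differences falling on a derivative factor $\partial^{\gamma_i}u$, controlled by $\|u\|_{B^s_{r,2}}$ after repeated use of H\"older; and differences falling on the composition factor $f^{(k)}(u)$, controlled by the generalized Lipschitz bound above (which is ultimately supplied by the $\mathcal{C}(\alpha)$ Hölder condition on $f^{([\alpha]+1)}$). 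Each resulting piece is estimated by H\"older plus Gagliardo--Nirenberg, and the $\ell^2$ summation over dyadic scales is absorbed into the Besov norm of $u$.

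The main obstacle is the top-order contribution, where all $[s]$ derivatives land on a single copy of $u$ and the remaining fractional difference of order $\sigma$ is forced onto the composition factor $f^{([s])}(u)$; there one must extract a factor of $|h|^\sigma$ using the Hölder regularity $\alpha-[\alpha]$ of $f^{([\alpha]+1)}$. Balancing $\sigma=s-[s]$ against this available Hölder exponent is precisely where the hypothesis $s<\alpha+1$ becomes decisive: it ensures that the total fractional regularity demanded of $f$ never exceeds the $\alpha+1$ quanta of smoothness encoded in $\mathcal{C}(\alpha)$, so the bound closes with the correct power $\|u\|_{L^\rho}^\alpha$ on the right-hand side.
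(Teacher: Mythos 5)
The paper does not prove this lemma at all: it is quoted directly from Ginibre--Ozawa--Velo \cite{G}, Lemma 3.4, so there is no internal argument to compare yours against. Your sketch is, in outline, the standard proof of such Moser-type estimates and essentially the one in the cited source: pointwise bounds $|f^{(k)}(z)|\lesssim|z|^{\alpha+1-k}$ from the vanishing of $f^{(j)}(0)$ for $j\le[\alpha]$, Fa\`a di Bruno combined with H\"older and Gagliardo--Nirenberg for integer $s$ (the exponents close precisely because $(\alpha+1-k)\frac1\rho+\frac1r+(k-1)\frac1\rho=\frac\alpha\rho+\frac1r=\frac1p$), and the first-difference characterization of the fractional part $\sigma=s-[s]$ in case (ii), with $s<\alpha+1$ entering exactly where you say it does. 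Two points deserve correction. First, the generalized Lipschitz bound $|f^{(k)}(z_1)-f^{(k)}(z_2)|\lesssim(|z_1|+|z_2|)^{\alpha-k}|z_1-z_2|$ is not obtained by ``iterating the second estimate in (\ref{fu})'' --- that estimate concerns $\partial_t f(u)$ and is itself a consequence of the class $\mathcal{C}(\alpha)$; the correct source is Taylor expansion of $f^{(k+1)}$ about $0$ using the hypotheses on $f^{([\alpha]+1)}$, and it is only available for $k\le[\alpha]$. When $[s]=[\alpha]+1$ (possible since $s<\alpha+1$ with $\alpha\notin\mathbb{Z}$) you must instead invoke the H\"older-type condition on $f^{([\alpha]+1)}$ itself; this case split is implicit in your ``main obstacle'' paragraph and should be made explicit, since it is the only place the two subcases of the definition of $\mathcal{C}(\alpha)$ are actually used. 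Second, the closing remark about ``$\ell^2$ summation over dyadic scales'' conflates the difference characterization with the Littlewood--Paley one; in your setup the $\ell^2$ structure is the $L^2(dh/|h|^{N+2\sigma})$ integral, and moving it past the H\"older factors requires a Minkowski step. Neither issue is fatal; the argument is sound.
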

\begin{lemma}[\cite{Na2}, Lemma 2.3]\label{l2}
	Assume $0<s<8,s\neq4$, $\max \left\{0,\frac{s}{4}-1\right\}<\alpha$ and $1\le\rho, r_0,r,q_0\le \infty $. Assume also that $1<\gamma, q<\infty $ and $\gamma ,\rho,q_0,r_0,q,r$ satisfy $\frac{1}{\gamma '}=\frac{\alpha}{q_0}+\frac{1}{q}, \frac{1}{\rho'}=\frac{\alpha}{r_0}+\frac{1}{r}$.
	Then for any $f\in \mathcal{C}(\alpha)$ and $u\in L^{q_0}L^{r_0}\cap B^{s/4}_{q,2}L^r$, we have
	\begin{equation}
		\left\|f(u)\right\|_{B^{s/4}_{\gamma ',2}L^{\rho'}}\lesssim \left\|u\right\|_{L^{q_0}L^{r_0}}^\alpha \left\|u\right\|_{B^{s/4}_{q,2}L^r}.\notag
	\end{equation}
\end{lemma}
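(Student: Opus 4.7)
The plan is to prove the estimate by the first-order-difference characterization of vector-valued Besov spaces, combined with the pointwise Lipschitz-type bounds afforded by the class $\mathcal{C}(\alpha)$. The hypothesis $s \neq 4$ reflects that $s/4 = 1$ is the threshold separating fractional differences from genuine time derivatives, so the argument naturally splits into the two regimes $0 < s < 4$ and $4 < s < 8$.

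\textbf{Case $0<s<4$ (so $\theta := s/4 \in (0,1)$).} I would invoke the standard equivalence
\[
\|g\|_{B^\theta_{\gamma',2}(\R,L^{\rho'})} \approx \|g\|_{L^{\gamma'}L^{\rho'}} + \Bigl(\int_\R \frac{\|g(\cdot+h)-g(\cdot)\|_{L^{\gamma'}L^{\rho'}}^2}{|h|^{1+2\theta}}\,dh\Bigr)^{1/2}.
\]
Applied to $g = f(u)$, the first summand is handled via $|f(u)| \lesssim |u|^{\alpha+1}$ and H\"older in $x$ (using $1/\rho' = \alpha/r_0 + 1/r$) and then in $t$ (using $1/\gamma' = \alpha/q_0 + 1/q$). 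For the second summand, apply the pointwise estimate (\ref{fu}) to get
\[
|f(u(t+h,x))-f(u(t,x))| \lesssim (|u(t+h,x)|^\alpha + |u(t,x)|^\alpha)\,|u(t+h,x)-u(t,x)|,
\]
then run the same two H\"older steps to arrive at
\[
\|f(u(\cdot+h))-f(u(\cdot))\|_{L^{\gamma'}L^{\rho'}} \lesssim \|u\|_{L^{q_0}L^{r_0}}^\alpha \|u(\cdot+h)-u(\cdot)\|_{L^q L^r}.
\]
Squaring, dividing by $|h|^{1+s/2}$, and integrating in $h$ recovers $\|u\|_{L^{q_0}L^{r_0}}^{2\alpha}\|u\|_{B^{s/4}_{q,2}L^r}^2$ via the same characterization now applied to $u$.

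\textbf{Case $4<s<8$ (so $s/4\in(1,2)$).} Here I would use the equivalence
\[
\|g\|_{B^{s/4}_{\gamma',2}L^{\rho'}} \approx \|g\|_{L^{\gamma'}L^{\rho'}} + \|\partial_t g\|_{B^{s/4-1}_{\gamma',2}L^{\rho'}}
\]
to reduce to estimating $\partial_t f(u) = f'(u)\,\partial_t u$ in $B^{s/4-1}_{\gamma',2}L^{\rho'}$, then re-run the Case~$1$ argument on this product via the splitting
\[
f'(u(t+h))\partial_t u(t+h) - f'(u(t))\partial_t u(t) = f'(u(t+h))\bigl[\partial_t u(t+h)-\partial_t u(t)\bigr] + \bigl[f'(u(t+h))-f'(u(t))\bigr]\partial_t u(t).
\]
The first summand is controlled by $|f'(u)|\lesssim|u|^\alpha$ together with the embedding $\|\partial_t u\|_{B^{s/4-1}_{q,2}L^r}\lesssim \|u\|_{B^{s/4}_{q,2}L^r}$, while the second summand is controlled through the $\mathcal{C}(\alpha)$-derived estimate on $|f'(a)-f'(b)|$. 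It is precisely here that the hypothesis $\alpha>s/4-1$ enters.

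\textbf{Main obstacle.} The hard part is the ``commutator'' summand $[f'(u(t+h))-f'(u(t))]\partial_t u(t)$ in the $4<s<8$ regime: it mixes one time-difference of $u$ with one time-derivative of $u$, so the regularity budget is tight, and when $\alpha\in(0,1)$ (which can happen for $s$ only slightly larger than $4$) the function $f'$ is only H\"older of exponent $\alpha$, not Lipschitz. To close the estimate exactly at $\alpha>s/4-1$, I expect to have to either pass to a second-order time-difference characterization of $B^{s/4}$, thereby avoiding $\partial_t f(u)$ altogether and reducing directly to second differences of $f$, or decompose $u$ and $\partial_t u$ into low/high time-frequency paraproducts in the spirit of Littlewood--Paley. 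This delicate bookkeeping, rather than any single calculation, is where the bulk of the technical effort lies.
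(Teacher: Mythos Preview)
The paper does not supply a proof of this lemma; it is quoted directly as Lemma~2.3 of \cite{Na2}, and the spatial analogue (Lemma~\ref{l1} here) is likewise quoted without proof from \cite{G}. So there is no in-paper argument to compare your proposal against.

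Your approach is the standard one and almost certainly matches what \cite{Na2} does, since the statement is precisely the time-variable transplant of \cite{G}, Lemma~3.4. The case $0<s<4$ via the first-difference characterization closes exactly as you wrote. For $4<s<8$ you have correctly isolated the genuine obstruction: in the derivative-plus-first-difference scheme the commutator $[f'(u(\cdot+h))-f'(u(\cdot))]\,\partial_t u$ is problematic because the only slot carrying time-regularity is the single $L^qL^r$ factor, and once $\partial_t u$ occupies it there is no $|h|$-decay left for $\Delta_h u$ in $L^{q_0}L^{r_0}$. Be aware that your proposed second-difference remedy hides the same accounting issue in disguise: after writing $\Delta_h^2(f\circ u)=f'(u_1)\,\Delta_h^2 u+\text{(Taylor remainders)}$, a naive H\"older split of the remainder still extracts at most $|h|^1$ from the regular slot, which is insufficient against the weight $|h|^{-1-s/2}$ when $s/4>1$. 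Closing the estimate at the sharp threshold $\alpha>s/4-1$ therefore requires the finer decomposition carried out in \cite{G}, which uses the full $\mathcal{C}(\alpha)$ structure at order $[\alpha]+1$ (not just the first-order Lipschitz bound (\ref{fu})); your Littlewood--Paley paraproduct alternative would also work. Reading the proof of \cite{G}, Lemma~3.4 and translating from $x$ to $t$ is the shortest path to filling the gap you correctly flag in your last paragraph.
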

We regard the solution of the Cauchy problem (\ref{NLS}) as the fixed point of the integral equation given by
\begin{equation}\label{Su}
	u(t)=(Su)(t)=e^{it(\Delta^2+\mu\Delta)}\phi+i \int_{0}^{t}e^{i\left(t-s\right)(\Delta^2+\mu\Delta)}f(u)\left(s\right)\mathrm{d}s,
\end{equation}
for $t\in\R$, where $u(t):=u(t,\cdot)$. Note that $Su$ satisfies
\begin{equation}\label{SNLS}
	\begin{cases}
		i\partial_t\left(Su\right)+\Delta^2\left(Su\right)+\mu\Delta \left(Su\right)+f(u)=0,\\
		\left(Su\right)(0)=\phi,
	\end{cases}
\end{equation}
and that
\begin{equation}\label{tNLS}
	\partial_t \left(Su\right)  = i e^{it(\Delta^2+\mu\Delta)}\left[\left(\Delta^2+\mu\Delta\right)\phi+f(\phi)\right]
	+i \int_{0}^{t}e^{i\left(t-s\right)(\Delta^2+\mu\Delta)}\partial_sf(u)\left(s\right)\mathrm{d}s.
\end{equation}

We now resume the proof of Theorem \ref{T1}. We consider  four cases: $0<s<4$, $s=4$, $4<s<6$ and $6\le s<8$.
\subsection{The case $0<s<4$}
Throughout this subsection, we fix
\begin{equation}\label{ga1}
	\gamma =\frac{2N+8}{N},\qquad \rho=\frac{2N+8}{N}.
\end{equation}
We then define $q,r,\overline{q},\overline{r}$ such that
\begin{equation}\label{1153}
	\overline{q}=\gamma ',\qquad\frac{4}{\overline{q}}-N\left(\frac{1}{2}-\frac{1}{\overline{r}}\right)=4-s
\end{equation}
and
\begin{equation}
	\frac{1}{\gamma '}=\frac{\alpha+1}{q},\qquad \frac{1}{\rho'}=\alpha \left(\frac{1}{r}-\frac{s}{N}\right)+\frac{1}{r}.\notag
\end{equation}
Since $\alpha=\frac{8}{N-2s}$, $0<s<\min \left\{\frac{N}{2},4\right\}$,  it is straightforward to verify that $(\gamma ,\rho), (q,r)\in\Lambda_b$ are two biharmonic admissible pairs, $1<\overline{q}<2<\overline{r}<\infty $, and $r<\frac{N}{s}$.

Assume $\left\|\phi\right\|_{H^s}$ sufficiently small such that
\begin{equation}\label{251}
	\left(2C_1\right)^{\alpha+1}\left\|\phi\right\|_{H^s}^\alpha\le1,\qquad   C_2\left(2C_1\left\|\phi\right\|_{H^s}\right)^\alpha\le \frac{1}{2},
\end{equation}
where $C_1,C_2$ are the constants in (\ref{148}) and (\ref{147}), respectively. Set $M=2C_1\left\|\phi\right\|_{H^s}$ and consider the metric space
$$
X_M=\left\{u\in L^\infty H^s\cap L^qB^s_{r,2}\cap B^{s/4}_{q,2}L^r:\left\|u\right\|_{L^\infty H^s \cap L^qB^s_{r,2}\cap B^{s/4}_{q,2}L^r}\le M\right\}.\notag
$$
It follows that $X_M$ is a complete metric space when equipped with the distance
\begin{equation}\label{d}
	d(u,v)=\left\|u-v\right\|_{L^\infty L^2\cap L^qL^r}.
\end{equation}
Next, we show that the map $S$, defined in (\ref{Su}), is a contraction on the space $X_M$.

We first show that $S$ maps $X_M$ into itself. Using  Proposition \ref{p1}, we get
\begin{equation}\label{144}
	\left\|Su\right\|_{L^\infty H^s\cap L^qB^s_{r,2}\cap B^{s/4}_{q,2}L^r}\lesssim \left\|\phi\right\|_{H^s}+\left\|f(u)\right\|_{B^{s/4}_{\gamma ',2}L^{\rho'}}+\left\|f(u)\right\|_{l^2L^{\overline{q}}L^{\overline{r}}}.
\end{equation}
Since $\frac{1}{\gamma '}=\frac{\alpha+1}{q}$, and $\frac{1}{\rho'}=\alpha \frac{N-sr}{Nr}+\frac{1}{r}$, we deduce from  Lemma \ref{l2}  and Sobolev's embedding $B^s_{r,2}(\R^N)\hookrightarrow L^{\frac{Nr}{N-sr}}(\R^N)$ that
\begin{equation}\label{145}
	\left\|f(u)\right\|_{B^{s/4}_{\gamma ',2}L^{\rho'}}\lesssim \left\|u\right\|_{L^q L^{\frac{Nr}{N-sr}}  }^\alpha \left\|u\right\|_{B^{s/4}_{q,2},L^r} \lesssim \left\|u\right\|_{L^qB^s_{r,2}}^\alpha \left\|u\right\|_{B^{s/4}_{q,2},L^r}.
\end{equation}
Next, we  estimate $\left\|f(u)\right\|_{l^2L^{\overline{q}}L^{\overline{r}}}$.
Since $r,\overline{r}>2$, we can choose $\ep>0$ sufficiently small and $\rho_\ep,q_\ep>2$ such that
\begin{equation}\label{1151}
	\frac{1}{\overline{r}}=\frac{1}{\rho_\ep}-\frac{\ep}{N},\qquad \frac{1}{q_\ep}=\frac{1}{r}-\frac{s-\ep}{N}.
\end{equation}
Then we deduce  from  Minkowski's inequality ($\overline{q}=\gamma '\le2$) and  Sobolev's embedding $B^\ep_{\rho_\ep,2}(\R^N)\hookrightarrow B^0_{\overline{r},2}(\R^N)$ that
\begin{equation}\label{141}
	\left\|f(u)\right\|_{l^2L^{\overline{q}}L^{\overline{r}}} \lesssim  \left\|f(u)\right\|_{L^{\gamma '}B^0_{\overline{r},2}}\lesssim \left\|f(u)\right\|_{L^{\gamma '}B^\ep_{\rho_\ep,2}}.
\end{equation}
Moreover, since $\frac{1}{\rho_\ep}=\alpha \left(\frac{1}{r}-\frac{s}{N}\right)+\frac{1}{q_\ep}$  by (\ref{ga1}), (\ref{1153}) and (\ref{1151}), it follows from Lemma \ref{l1}  and  Sobolev's embedding $B^s_{r,2}\left(\R^N\right)\hookrightarrow L^{\frac{Nr}{N-sr}}\left(\R^N\right)$ that
\begin{equation}\label{142}
	\left\|f(u)\right\|_{B^\ep_{\rho_\ep,2}}\lesssim \left\|u\right\|_{L^{\frac{Nr}{N-sr}}}^\alpha \left\|u\right\|_{B^\ep_{q_\ep,2}}\lesssim \left\|u\right\|_{B^s_{r,2}}^\alpha \left\|u\right\|_{B^\ep_{q_\ep,2}}.
\end{equation}
Estimates  (\ref{141}), (\ref{142}) and H\"older's inequality imply
\begin{equation}\label{146}
	\left\|f(u)\right\|_{l^2L^{\overline{q}}L^{\overline{r}}} \lesssim \left\|u\right\|_{L^qB^s_{r,2}}^\alpha \left\|u\right\|_{L^qB^\ep_{q_\ep,2}}\lesssim \left\|u\right\|_{L^qB^s_{r,2}}^{\alpha+1},
\end{equation}
where we used the embedding $B^s_{r,2}\left(\R^N\right)\hookrightarrow B^\ep_{q_\ep,2}\left(\R^N\right)$ (see (\ref{1151})) in the second inequality.
It now  follows from  (\ref{144}), (\ref{145}) and (\ref{146}) that, for any $u\in X_M$,
\begin{equation}\label{148}
	\left\|Su\right\|_{L^\infty H^s\cap L^qB^s_{r,2}\cap B^{s/4}_{q,2}L^r}\le C_1\left\|\phi\right\|_{H^s}+C_1\left\|u\right\|^{\alpha+1}_{L^qB^s_{r,2}\cap B^{s/4}_{q,2}L^r}\le M,
\end{equation}
where we used (\ref{251}) in the last inequality.

Our next aim is the desired Lipschitz property of $S$ with respect to the metric $d$ defined in (\ref{d}). For any $u,v\in X_M$, we deduce from Strichartz's estimate (\ref{SZ}), (\ref{fu}), (\ref{251}), H\"older's inequality and Sobolev's embedding that
\begin{eqnarray}\label{147}
	d(Su,Sv)&\lesssim &\left\|\left(\left|u\right|^{\alpha}+\left|v\right|^{\alpha}\right)\left(u-v\right)\right\|_{L^{\gamma '}L^{\rho'}}\notag\\
	&\lesssim &\left(\left\|u\right\|^\alpha_{L^qB^s_{r,2}}+\left\|v\right\|^\alpha_{L^qB^s_{r,2}}\right)\left\|u-v\right\|_{L^qL^r}\notag\\
	&\le& C_2M^\alpha d(u,v)\le\frac{1}{2}d(u,v).
\end{eqnarray}

Therefore,  by Banach's fixed point theorem, we conclude that the Cauchy problem (\ref{NLS}) admits a  unique global solution $u\in CH^s\cap L^qB^s_{r,2}\cap B^{s/4}_{q,2}L^r$, where the continuity of $u$ in time follows from Proposition \ref{p1}.
\subsection{The case $s=4$}
Throughout this subsection, we fix
\begin{equation}
	\gamma=\frac{8(\alpha+2)}{(N-8)\alpha},\qquad \rho=\frac{N(\alpha+2)}{N+4\alpha}.\notag
\end{equation} For $\phi\in H^4$ and $T>0$, we define
\begin{eqnarray}
	F(\phi,T)&=&\|e^{it(\Delta^2+\mu\Delta)}(\Delta^2+\mu\Delta)\phi\|_{L^{\gamma}\left([0,T],L^\rho\right)}+\|e^{it(\Delta^2+\mu\Delta)}f(\phi)\|_{L^{\gamma}\left([0,T],L^\rho\right)}\notag\\
	&&+\left\|e^{it(\Delta^2+\mu\Delta)}\phi\right\|_{L^\gamma \left([0,T], H^{4,\rho}\right)}. \notag
\end{eqnarray}
By Strichartz's estimate (\ref{sz}), (\ref{fu}) and Sobolev's embedding $H^4\left(\R^N\right) \hookrightarrow L^{2\alpha+2}\left(\R^N\right)$, we have
\begin{equation}\label{1261}
	F(\phi,T) \lesssim  \left\|\phi\right\|_{H^4}+\left\|f(\phi)\right\|_{L^2}
	\le  C_3 \left(\left\|\phi\right\|_{H^4}+\left\|\phi\right\|_{H^4}^{\alpha+1}\right).
\end{equation}
We then  recall the following result from  \cite{xuan}.
\begin{proposition}[Proposition 5.1 in \cite{xuan}]\label{p2}
	Let $N>8$, $\alpha=\frac{8}{N-8},\mu=0$ or $-1$ and $f\in\mathcal{C}(\alpha)$. There exists $M>0,C_4>0$ such that for any  $T>0$ with
	\begin{equation}\label{1281}
		C_4\left(1+\left\|\phi\right\|_{H^4}^\alpha\right)F(\phi,T)\le \frac{M}{2},
	\end{equation}
	the Cauchy problem (\ref{NLS}) admits a unique solution $u\in   C \left([0,T],H^4\right)\cap L^\gamma  \left([0,T],H^{4,\rho}\right)$ satisfying $\left\|u\right\|_{H^{1,\gamma }\left([0,T],L^\rho\right)\cap L^\gamma \left([0,T],H^{4,\rho}\right)}\le M$.
\end{proposition}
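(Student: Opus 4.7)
The strategy is a contraction-mapping argument for the Duhamel map $S$ defined in (\ref{Su}) on a closed ball
\[
X_M=\bigl\{u\in L^\gamma([0,T],H^{4,\rho})\cap H^{1,\gamma}([0,T],L^\rho):\|u\|_{L^\gamma H^{4,\rho}}+\|\partial_tu\|_{L^\gamma L^\rho}\le M\bigr\},
\]
equipped with the weaker metric $d(u,v)=\|u-v\|_{L^\gamma([0,T],L^\rho)}$; completeness of $(X_M,d)$ is standard. A direct computation shows that $(\gamma,\rho)$ coincides with the pair $(q_2,r_2)$ of Theorem~\ref{T1} and is therefore biharmonic admissible, so Strichartz's estimate (\ref{SZ}) is available.

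For the self-mapping step I would control $\|Su\|_{L^\gamma H^{4,\rho}}$ via Strichartz applied to (\ref{Su}), together with Lemma~\ref{l1} and Sobolev embedding, to obtain $\lesssim\|\phi\|_{H^4}+\|u\|^{\alpha+1}_{L^\gamma H^{4,\rho}}$. For $\|\partial_t(Su)\|_{L^\gamma L^\rho}$ I would plug the identity (\ref{tNLS}) into Strichartz, so that the linear pieces contribute $F(\phi,T)$ while the Duhamel piece is driven by $\partial_sf(u)$. The pointwise bound $|\partial_sf(u)|\lesssim|u|^\alpha|\partial_su|$ from (\ref{fu}), combined with H\"older and a critical Sobolev embedding from $L^\gamma H^{4,\rho}$, would yield $\|\partial_sf(u)\|_{L^{a'}L^{b'}}\lesssim\|u\|^\alpha_{L^\gamma H^{4,\rho}}\|\partial_tu\|_{L^\gamma L^\rho}$. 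Summing,
\[
\|Su\|_{X_M}\le C\,F(\phi,T)+C(1+\|\phi\|_{H^4}^\alpha)M^{\alpha+1},
\]
and the hypothesis (\ref{1281}), together with the choice $M$ proportional to $F(\phi,T)$, closes the self-mapping.

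For the contraction property in the metric $d$, I would write $Su-Sv$ as Duhamel with source $f(u)-f(v)$, use $|f(u)-f(v)|\lesssim(|u|^\alpha+|v|^\alpha)|u-v|$ from (\ref{fu}), and apply Strichartz with H\"older to get
\[
d(Su,Sv)\lesssim\bigl(\|u\|_{L^\gamma H^{4,\rho}}^\alpha+\|v\|_{L^\gamma H^{4,\rho}}^\alpha\bigr)d(u,v)\le CM^\alpha d(u,v),
\]
which is a strict contraction for $M$ small. Banach's fixed-point theorem then produces the unique $u\in X_M$ with $Su=u$, and continuity in time follows as in the proof of Proposition~\ref{p1}.

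The main obstacle is the nonlinear estimate for $\partial_tf(u)$: one must split
\[
\|u^\alpha\partial_tu\|_{L^{a'}L^{b'}}\le\|u\|_{L^{\gamma_1}L^{\rho_1}}^\alpha\|\partial_tu\|_{L^\gamma L^\rho}
\]
by H\"older with $1/a'=\alpha/\gamma_1+1/\gamma$ and $1/b'=\alpha/\rho_1+1/\rho$, and arrange that $L^{\gamma_1}L^{\rho_1}$ is exactly the critical Sobolev embedding target of $L^\gamma H^{4,\rho}$; this scaling identity, which is available precisely because $\alpha=8/(N-8)$, is what forces the specific pair $(\gamma,\rho)=(q_2,r_2)$. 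A secondary subtlety is that Lemma~\ref{l1} at $s=4$ requires $\alpha>3$, which fails for large $N$; in that range one further decomposes the four spatial derivatives through the chain rule and exploits the Lipschitz bound on $f^{([\alpha]+1)}$ guaranteed by the class $\mathcal{C}(\alpha)$ to recover the nonlinear estimate.
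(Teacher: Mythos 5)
First, note that the paper does not actually prove this statement: Proposition \ref{p2} is imported verbatim from Proposition 5.1 of \cite{xuan}, so there is no in-paper proof to compare against. Your overall architecture (contraction for the Duhamel map $S$ on a ball of $L^\gamma H^{4,\rho}\cap H^{1,\gamma}L^\rho$ under the weak metric $\|u-v\|_{L^\gamma L^\rho}$, with the linear pieces of (\ref{tNLS}) producing exactly $F(\phi,T)$, and the H\"older/Sobolev bookkeeping $1/\gamma'=\alpha/\gamma+1/\gamma$, $1/\rho'=\alpha/\rho_1+1/\rho$ with $H^{4,\rho}\hookrightarrow L^{\rho_1}$) is the right one and the exponent arithmetic checks out.

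There is, however, a genuine gap in the self-mapping step for $\|Su\|_{L^\gamma H^{4,\rho}}$. You propose to estimate $\|f(u)\|$ in a four-derivative norm via Lemma \ref{l1}, which at $s=4$ requires $\alpha>3$, i.e.\ $N\le 10$; you flag this but claim that for larger $N$ one can ``decompose the four spatial derivatives through the chain rule'' using the Lipschitz bound on $f^{([\alpha]+1)}$. This cannot work: for a general $f\in\mathcal{C}(\alpha)$ with $\alpha=8/(N-8)$ small, $f$ is only $C^{[\alpha]+1}$ (e.g.\ only $C^1$ for $N\ge 17$), so four derivatives of $f(u)$ simply do not exist, and no chain-rule manipulation recovers them; the restriction $s<\alpha+1$ in Lemma \ref{l1} is sharp for this class. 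The correct device --- and the reason the $s=4$ case is singled out --- is to never differentiate $f(u)$ in $x$ at all: bound $\|\partial_t(Su)\|_{L^\gamma L^\rho}$ and $\|Su\|_{L^\gamma L^\rho}$ (which need only the $C^1$ bound $|\partial_t f(u)|\lesssim|u|^\alpha|\partial_tu|$ and $|f(u)|\lesssim|u|^{\alpha+1}$), and then recover $\|Su\|_{H^{4,\rho}}$ from the equation (\ref{SNLS}) via the elliptic estimate $\|v\|_{H^{4,\rho}}\lesssim\|(\Delta^2+\mu\Delta)v\|_{L^\rho}+\|v\|_{L^\rho}$ applied to $(\Delta^2+\mu\Delta)(Su)=-i\partial_t(Su)-f(u)$. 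This is exactly the mechanism the present paper uses in the ranges $4<s<6$ and $6\le s<8$ (see (\ref{149})--(\ref{1416})), and it is what makes the hypothesis $f\in\mathcal{C}(\alpha)$ with no lower bound on $\alpha$ sufficient at $s=4$. With that replacement your argument closes; the contraction step in the weak metric and the completeness of the ball are fine as written.
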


Let $\left\|\phi\right\|_{H^4}$ sufficiently small such that
\begin{equation}\label{1282}
	C_3C_4\left(1+\left\|\phi\right\|_{H^4}^\alpha\right)\left(\left\|\phi\right\|_{H^4}+\left\|\phi\right\|_{H^4}^{\alpha+1}\right)\le \frac{M}{2},
\end{equation}
where $C_3,C_4$ are the constants in (\ref{1261}) and (\ref{1281}) respectively.
It now follows from   Proposition \ref{p2}, (\ref{1261}) and (\ref{1282}) that  for any $T>0$, the Cauchy problem (\ref{NLS}) admits a unique global solution $u\in C \left([0,T],H^4\right)\cap L^\gamma  \left([0,T],H^{4,\rho}\right)$ with $\left\|u\right\|_{H^{1,\gamma }\left([0,T],L^\rho\right)\cap L^\gamma \left([0,T],H^{4,\rho}\right) }\le M$. Since $T>0$ is arbitrary and $M>0$ is fixed, we deduce that    (\ref{NLS}) admits a unique solution $u\in C \left([0,\infty ),H^4\right)\cap L^\gamma  \left([0,\infty ),H^{4,\rho}\right)$. By  symmetry, a similar conclusion is reached in the negative time direction. Therefore, we obtain a unique solution $u\in C H^4\cap L^\gamma  H^{4,\rho}$ to (\ref{NLS}).
\subsection{The case $4<s<6$}
Throughout this subsection, we fix
\begin{equation}
	\gamma =2, \qquad \rho=\frac{2N}{N-4}.\notag
\end{equation}
We then define $q,r,\overline{q},\overline{r}$ such that
\begin{equation}
	\overline{q}=2,\qquad\frac{4}{\overline{q}}-N\left(\frac{1}{2}-\frac{1}{\overline{r}}\right)=8-s,\notag
\end{equation}
and
\begin{equation}
	\frac{1}{\gamma '}=\frac{\alpha+1}{q},\qquad \frac{1}{\rho'}=\alpha \left(\frac{1}{r}-\frac{s}{N}\right)+\frac{1}{r}.\notag
\end{equation}
Since $4<s<6,N>2s$, it is straightforward to verify that $(\gamma ,\rho), (q,r)\in\Lambda_b$ are two biharmonic admissible pairs, $1<\overline{r}<2 $,   $r<\frac{N}{s}$ and
$\frac{1}{\overline{r}}=\alpha \left(\frac{1}{r}-\frac{s}{N}\right)+\frac{1}{r}-\frac{s-4}{N}$.

Assume $\left\|\phi\right\|_{H^s}$ sufficiently small such that
\begin{equation}\label{252}
	\left(2C_5\right)^{\alpha+1}\left(\left\|\phi\right\|_{H^s}+\left\|\phi\right\|_{H^s}^{\alpha+1}\right)^\alpha\le1,\qquad \left(C_6+C_7\right)\left(2C_5\left(\left\|\phi\right\|_{H^s}+\left\|\phi\right\|_{H^s}^{\alpha+1}\right)\right)^\alpha\le \frac{1}{2},
\end{equation}
where $C_5,C_6,C_7$ are the constants in (\ref{1420}), (\ref{1421}) and (\ref{1283}), respectively. Set $M=2C_5(\left\|\phi\right\|_{H^s}$ $+\left\|\phi\right\|_{H^s}^{\alpha+1})$ and consider the metric space
\begin{equation}\label{7273}\begin{array}{c}
		Y_{M}=\left\{u \in L^\infty H^s\cap L^qB^s_{r,2}\cap B^{s/4}_{q,2}L^r\cap H^{1,q}B^{s-4}_{r,2}:\right. \\
		\qquad\qquad\qquad\left.
		\left\|u\right\|_{L^\infty H^s\cap L^qB^s_{r,2}\cap B^{s/4}_{q,2}L^r\cap H^{1,q}B^{s-4}_{r,2}}\le M\right\}.
	\end{array}\notag
\end{equation}
It follows that $Y_M$ is a complete metric space when equipped with the distance
\begin{equation}\label{d2}
	d(u,v)=\left\|u-v\right\|_{L^\infty L^2\cap L^qL^r}.
\end{equation}
Next, we show that the map $S$, defined in (\ref{Su}), is a contraction on the space $Y_M$.

We first show that $S$ maps $Y_M$ into itself. From the equation (\ref{SNLS}), we have
\begin{equation}\label{149}
	\left\|Su\right\|_{L^\infty H^s}\le \left\|Su\right\|_{L^\infty L^2}+\left\|Su\right\|_{L^\infty H^{s-2}}+\left\|\partial_t\left(Su\right)\right\|_{L^\infty H^{s-4}}+\left\|f(u)\right\|_{L^\infty H^{s-4}}
\end{equation}
and
\begin{equation}\label{1410}
	\left\|Su\right\|_{L^qB^s_{r,2}}\le \left\|Su\right\|_{L^qL^r}+\left\|Su\right\|_{L^qB^{s-2}_{r,2}}+\left\|\partial_t\left(Su\right)\right\|_{L^qB^{s-4}_{r,2}}+\left\|f(u)\right\|_{L^qB^{s-4}_{r,2}}.
\end{equation}
Since $\left(H^s,L^2\right)_{2/s,2}=H^{s-2}$ and $\left(B^s_{r,2},B^0_{r,\infty }\right)_{2/s,2}=B^{s-2}_{r,2}$ (see Theorem 6.4.5 in \cite{Bergh}),  it follows from H\"older's inequality and Young's inequality that
\begin{equation}\label{1411}
	\left\|Su\right\|_{L^\infty H^{s-2}}\lesssim \left\|Su\right\|_{L^\infty H^s}^{1-2/s}\left\|Su\right\|_{L^\infty L^2}^{2/s}\le \frac{1}{2}\left\|Su\right\|_{L^\infty H^s}+C\left\|Su\right\|_{L^\infty L^2},
\end{equation}
and
\begin{equation}\label{1412}
	\left\|Su\right\|_{L^qB^{s-2}_{r,2}}\lesssim \left\|Su\right\|_{L^qB^s_{r,2}}^{1-2/s}\left\|Su\right\|_{L^qB^0_{r,\infty }}^{2/s}\le \frac{1}{2}\left\|Su\right\|_{L^qB^s_{r,2}}+C\left\|Su\right\|_{L^qL^r},
\end{equation}
where we used the embedding $L^r\left(\R^N\right) \hookrightarrow B^0_{r,\infty }\left(\R^N\right)$ (see Theorem 6.4.4 in \cite{Bergh}) in (\ref{1412}).
Estimates (\ref{149})--(\ref{1412}) imply
\begin{equation}\label{1415}
	\left\|Su\right\|_{L^\infty H^s}\leq \left\|Su\right\|_{L^\infty L^2}+\left\|\partial_t\left(Su\right)\right\|_{L^\infty H^{s-4}}+\left\|f(u)\right\|_{L^\infty H^{s-4}}
\end{equation}
and
\begin{equation}\label{1416}
	\left\|Su\right\|_{L^qB^s_{r,2}}\le \left\|Su\right\|_{L^qL^r}+\left\|\partial_t\left(Su\right)\right\|_{L^qB^{s-4}_{r,2}}+\left\|f(u)\right\|_{L^qB^{s-4}_{r,2}}.
\end{equation}
From (\ref{1415}) and (\ref{1416}), we have
\begin{eqnarray}\label{271}
	&&\left\|Su\right\|_{L^\infty H^s\cap L^qB^{s}_{r,2}\cap B^{s/4}_{q,2}L^r}\\
	&\lesssim& \left\|Su\right\|_{L^\infty L^2\cap L^qL^r}+\left\|f(u)\right\|_{L^\infty H^{s-4}\cap L^qB^{s-4}_{r,2}}
	+\left\|\partial_t  \left(Su\right)\right\|_{L^\infty H^{s-4}\cap L^qB^{s-4}_{r,2}\cap B^{\left(s-4\right)/4}_{q,2}L^r}.\notag
\end{eqnarray}
We first estimate $\left\|Su\right\|_{L^\infty L^2\cap L^qL^r}$. Since $\frac{1}{\gamma '}=\frac{\alpha+1}{q}, \frac{1}{\rho'}=\alpha \left(\frac{1}{r}-\frac{s}{N}\right)+\frac{1}{r}$, it follows from  Strichartz's estimates  (\ref{sz})--(\ref{SZ}), (\ref{fu}),  H\"older's inequality and Sobolev's embedding that
\begin{equation}\label{1417}
	\left\|Su\right\|_{L^\infty L^2\cap L^qL^r} \lesssim \left\|\phi\right\|_{L^2}+\left\|\left|u\right|^{\alpha}u\right\|_{L^{\gamma '}L^{\rho'}}
	\lesssim  \left\|\phi\right\|_{H^s}+\left\|u\right\|_{L^qB^s_{r,2}}^\alpha \left\|u\right\|_{L^qL^r}.
\end{equation}
Next, we estimate $\left\|f(u)\right\|_{L^\infty H^{s-4}\cap L^qB^{s-4}_{r,2}}$. Let
$p_1=\frac{2N}{N-8}$. Since $\frac{1}{2}=\alpha \frac{N-2s}{2N}+\frac{1}{p_1}$ and $\alpha+1>s-4$, we deduce from Lemma \ref{l1} and Sobolev's embedding $H^s \left(\R^N\right) \hookrightarrow B^{s-4}_{p_1,2}\left(\R^N\right)\cap L^{\frac{2N}{N-2s}}\left(\R^N\right)$ that
\begin{equation}\label{1413}
	\left\|f(u)\right\|_{L^\infty H^{s-4}}\lesssim \left\|\left\|u\right\|_{ \frac{2N}{N-2s}}^\alpha \left\|u\right\|_{B^{s-4}_{p_1,2}}\right\|_{L^\infty }\lesssim \left\|u\right\|_{L^\infty H^s}^{\alpha+1}.
\end{equation}
Let $p_2$ be given by   $\frac{1}{r}=\frac{\alpha(N-2s)}{2N}+\frac{1}{p_2}$. Similar to (\ref{1413}), we have
\begin{equation}\label{1414}
	\left\|f(u)\right\|_{L^qB^{s-4}_{r,2}}\lesssim \left\|u\right\|_{L^\infty \frac{2N}{N-2s}}^\alpha \left\|u\right\|_{L^qB^{s-4}_{p_2,2}}\lesssim \left\|u\right\|_{L^\infty H^s}^\alpha \left\|u\right\|_{L^qB^s_{r,2}}.
\end{equation}
Finally,  we claim that
\begin{eqnarray}
	\label{272}
	&&\left\|\partial_t\left(Su\right)\right\|_{L^\infty H^{s-4}\cap L^qB^{s-4}_{r,2}\cap B^{(s-4)/4}_{q,2}L^r}\notag\\
	&\lesssim & \left\|\phi\right\|_{H^s}+\left\|\phi\right\|_{H^s}^{\alpha+1}+\left\|u\right\|_{L^qB^{s}_{r,2}}^\alpha \left(\left\|u\right\|_{B^{s/4}_{q,2}L^r}+\left\|\partial_t  u\right\|_{L^q B^{s-4}_{r,2}}\right).
\end{eqnarray}
In fact, form  the equation (\ref{tNLS}) and the inequality (\ref{i2}), we have
\begin{eqnarray}\label{1197}
	&& \left\|\partial_t\left(Su\right)\right\|_{L^\infty H^{s-4}\cap L^qB^{s-4}_{r,2}\cap B^{(s-4)/4}_{q,2}L^r}\notag\\
	&\lesssim& \left\|\phi\right\| _{H^{s}}+\left\|f(\phi)\right\|_{H^{s-4}}+\left\|\partial_tf(u)\right\|_{B^{s/4-1}_{\gamma ',2}L^{\rho'}}+\left\|\partial_tf(u)\right\|_{l^2L^{\overline{q}}L^{\overline{r}}}.
\end{eqnarray}
Similar to (\ref{1413}), we have
\begin{equation}\label{1198}
	\left\|f(\phi)\right\|_{H^{s-4}}\lesssim \left\|\phi\right\|_{H^s}^{\alpha+1}.
\end{equation}
Moreover, we deduce from  Lemma \ref{l2} and Sobolev's embedding $B^s_{r,2}\left(\R^N\right)\hookrightarrow L^{\frac{Nr}{N-sr}}\left(\R^N\right)$ that
\begin{equation}\label{1165}
	\left\|\partial_tf(u)\right\|_{B^{s/4-1}_{\gamma ',2}L^{\rho'}}\lesssim \left\|f(u)\right\|_{B^{s/4}_{\gamma ',2}L^{\rho'}} \lesssim \left\|u\right\|_{L^qB^s_{r,2}}^\alpha \left\|u\right\|_{B^{s/4}_{q,2},L^r}.
\end{equation}
On the other hand, since $\overline{q}=\gamma '=2$ and $1<\overline{r}\le2$, it follows from  Minkowski's inequality and the embedding   $L^{\overline{r}}\left(\R^N\right)\hookrightarrow B^0_{\overline{r},2}\left(\R^N\right)$ (see Theorem 6.4.4 in \cite{Bergh}) that
\begin{equation}\label{11910}
	\left\|\partial_tf(u)\right\|_{l^2L^{\overline{q}}L^{\overline{r}}} \lesssim  \left\|\partial_tf(u)\right\|_{L^{\overline{q}}B^0_{\overline{r},2}}\lesssim \left\|\partial_tf(u)\right\|_{L^{\gamma '}L^{\overline{r}}}.
\end{equation}
Since $\frac{1}{\overline{r}}=\alpha \left(\frac{1}{r}-\frac{s}{N}\right)+\frac{1}{r}-\frac{s-4}{N}$, it follows from   (\ref{fu}), H\"older's inequality and Sobolev's embedding $B^s_{r,2}\left(\R^N\right)\hookrightarrow L^{\frac{Nr}{N-sr}}\left(\R^N\right), B^{s-4}_{r,2}\left(\R^N\right)\hookrightarrow L^{\frac{Nr}{N-(s-4)r}}\left(\R^N\right)$ that
$$
\left\|\partial_tf(u)\right\|_{L^{\overline{r}}}\lesssim \left\|u\right\|_{L^{\frac{Nr}{N-sr}}}^\alpha \left\|\partial_tu\right\|_{L^{\frac{Nr}{N-(s-4)r}}}\lesssim  \left\|u\right\|_{B^s_{r,2}}^\alpha \left\|\partial_tu\right\|_{B^{s-4}_{r,2}}.
$$
This inequality together with (\ref{11910}), H\"older's inequality implies
\begin{equation}\label{1419}
	\left\|\partial_tf(u)\right\|_{l^2L^{\overline{q}}L^{\overline{r}}}
	\lesssim \left\|u\right\|^\alpha_{L^q B^s_{r,2}}\left\|\partial_tu\right\|_{L^q B^{s-4}_{r,2}}.
\end{equation}
The inequality (\ref{272}) is now an immediate consequence of (\ref{1197}), (\ref{1198}), (\ref{1165}) and (\ref{1419}).

Estimates  (\ref{271})--(\ref{272}) imply that, for any $u\in Y_M$,
\begin{equation}\label{1420}
	\left\|Su\right\|_{L^\infty H^s\cap L^qB^s_{r,2}\cap B^{s/4}_{q,2}L^r \cap H^{1,q}B^{s-4}_{r,2}}\le C_5\left(\left\|\phi\right\|_{H^s}+\left\|\phi\right\|_{H^s}^{\alpha+1}\right)+C_5M^{\alpha+1}\le M,
\end{equation}
where we used (\ref{252}) in the second inequality.

Our next aim is the desired Lipschitz property of $S$ with respect to the metric $d$ defined in (\ref{d2}). Similar to (\ref{147}), we have for any $u,v\in Y_M$,
\begin{eqnarray}\label{1421}
	d(Su,Sv)&\lesssim&  \left(\left\|u\right\|^\alpha_{L^qB^s_{r,2}}+\left\|v\right\|^\alpha_{L^qB^s_{r,2}}\right)\left\|u-v\right\|_{L^qL^r}\notag\\
	&\le& C_6M^\alpha d(u,v)\le\frac{1}{2}d(u,v).
\end{eqnarray}

Therefore, we deduce from Banach's fixed point argument that the Cauchy problem (\ref{NLS}) admits a  unique global solution $u\in L^\infty H^s\cap L^qB^s_{r,2}\cap B^{s/4}_{q,2}L^r\cap H^{1,q}B^{s-4}_{r,2}$.

It remains to  prove that $u\in C\left(\R, H^s\right)$.  Similar to (\ref{1415}), we have
\begin{eqnarray}\label{273}
	\left\|u(t_1)-u(t_2)\right\|_{H^s}&\lesssim& \left\|\partial_t u(t_1)-\partial_t u(t_2)\right\|_{H^{s-4}}+\left\|u(t_1)-u(t_2)\right\|_{L^2}\notag\\
	&&+\left\|f\left(u(t_1)\right)-f\left(u(t_2)\right)\right\|_{H^{s-4}}.
\end{eqnarray}
Since $\partial_tf\in B^{s/4-1}_{\gamma ',2}L^{\rho'}\cap l^2L^{\overline{q}}L^{\overline{r}}$ by (\ref{1165}) and  (\ref{11910}), we deduce from  (\ref{tNLS}) and Proposition \ref{p1} that $u\in C^1\left(\R,H^{s-4}\right)$,
so that by (\ref{273}) it suffices to prove  $f(u)\in C\left(\R,H^{s-4}\right)$.

To this end, we first show that $f(u)\in C\left(\R,B^{0}_{\rho_0,\infty }\right)$, where $\rho_0$ is given by $\frac{1}{\rho_0}=\frac{1}{2}-\frac{s-4}{N}$.
Indeed, using the same method as that used to derive (\ref{1416}), we obtain
\begin{eqnarray}\label{1284}
	\left\|u(t_1)-u(t_2)\right\|_{H^{4,\rho_0}}&\lesssim& \left\|\partial_t u(t_1)-\partial_t u(t_2)\right\|_{L^{\rho_0}}+\left\|u(t_1)-u(t_2)\right\|_{L^{\rho_0}}\notag\\
	&&+\left\|f\left(u(t_1)\right)-f\left(u(t_2)\right)\right\|_{L^{\rho_0}}.
\end{eqnarray}
Moreover, it follows from  (\ref{fu}), H\"older's inequality, Sobolev's embedding $H^s\left(\R^N\right) \hookrightarrow H^{\frac{2N}{N-2s}}\left(\R^N\right)$ and $H^{4,\rho_0}\left(\R^N\right)\hookrightarrow L^{\frac{N\rho_0}{N-4\rho_0}}\left(\R^N\right)$ that
\begin{eqnarray}\label{1221}
	&&\left\|f\left(u(t_1)\right)-f\left(u(t_2)\right)\right\|_{L^{\rho_0}}\notag\\
	&\lesssim&\left(\left\|u(t_1)\right\|_{L^{\frac{2N}{N-2s}}}^\alpha+\left\|u(t_2)\right\|^\alpha_{L^\frac{2N}{N-2s}}\right)\left\|u(t_1)-u(t_2)\right\|_{L^{\frac{N\rho_0}{N-4\rho_0}}}\notag\\
	&\lesssim & \left\|u\right\|_{L^\infty H^s}^\alpha \left\|u(t_1)-u(t_2)\right\|_{H^{4,\rho_0}}.
\end{eqnarray}
Combining (\ref{1284}) and  (\ref{1221}), we obtain
\begin{eqnarray}\label{1283}
	\left\|u(t_1)-u(t_2)\right\|_{H^{4,\rho_0}}&\le& C_7\left\|\partial_t u(t_1)-\partial_t u(t_2)\right\|_{L^{\rho_0}}+C_7\left\|u(t_1)-u(t_2)\right\|_{L^{\rho_0}}\notag\\
	&&+ C_7\left\|u\right\|_{L^\infty H^s}^\alpha \left\|u(t_1)-u(t_2)\right\|_{H^{4,\rho_0}}.
\end{eqnarray}
Since $C_7\left\|u\right\|_{L^\infty H^s}^\alpha\le C_7M^\alpha\le \frac{1}{2}$ in (\ref{252}), we have
\begin{equation}\label{1222}
	\left\|u(t_1)-u(t_2)\right\|_{H^{4,\rho_0}}\lesssim  \left\|\partial_t u(t_1)-\partial_t u(t_2)\right\|_{L^{\rho_0}}+\left\|u(t_1)-u(t_2)\right\|_{L^{\rho_0}}.
\end{equation}
On the other hand, since $u\in C^1\left(\R,H^{s-4}\right)$ and $H^{s-4}\left(\R^N\right)\hookrightarrow L^{\rho_0}\left(\R^N\right)$, we have $u\in C^1\left(\R,L^{\rho_0}\right)$.  This together with (\ref{1222}) implies $u\in C\left(\R,H^{4,\rho_0}\right)$.  So by  (\ref{1221}) and Sobolev's embedding $L^{\rho_0}\left(\R^N\right) \hookrightarrow B^0_{\rho_0,\infty }\left(\R^N\right)$, we have   $f(u)\in C\left(\R,B^0_{\rho_0,\infty }\right)$.

We proceed to show $f(u)\in C\left(\R,H^{s-4}\right)$.  Let $\frac{1}{\rho_\ep}=\frac{1}{2}+\frac{\ep}{N}$ and $p_\ep=\frac{2N}{N-8+2\ep}$, where $\ep>0$ sufficiently small such that $\alpha>s-5+\ep$. We then claim that  $f(u)$ is bounded in $B^{s-4+\ep}_{\rho_\ep,2}$. In fact this follows from Lemma \ref{l2} ( $\frac{1}{\rho_\ep}=\alpha \frac{N-2s}{2N}+\frac{1}{p_\ep}$) and Sobolev's embedding $H^s\left(\R^N\right) \hookrightarrow L^{\frac{2N}{N-2s}}\left(\R^N\right)\cap B^{s-4+\ep}_{p_\ep,2}\left(\R^N\right)$,
\begin{equation}\label{1285}
	\left\|f(u)\right\|_{B^{s-4+\ep}_{\rho_\ep,2}}\lesssim \left\|u\right\|_{L^{\frac{2N}{N-2s}}}^\alpha \left\|u\right\|_{B^{s-4+\ep}_{p_\ep,2}}\lesssim \left\|u\right\|_{H^s}^{\alpha+1}.
\end{equation}
Then by the interpolation theorem  (see Theorem 6.4.5 in \cite{Bergh}), we have
$$
\left(B^0_{\rho_0,\infty },B^{s-4+\ep}_{\rho_\ep,2}\right)_{\theta,2}=B^{s-4}_{2,2}=H^{s-4}, \qquad \theta=\frac{s-4}{s-4+\ep}.
$$
This together with  (\ref{1285})  and the fact $f(u)\in C\left(\R,B^0_{\rho_0,\infty }\right)$ implies $f(u)\in C\left(\R,H^{s-4}\right)$. Combing (\ref{273}) and $u\in C^1\left(\R,H^{s-4}\right)$, we can immediately get that  $u\in C\left(\R, H^s\right)$.

\subsection{The case $6\leq s<8$}
Throughout this subsection, we fix $r=\frac{2N}{N-4}$.
Assume $\left\|\phi\right\|_{H^s}$ sufficiently small such that
\begin{equation}\label{253}
	\left(2C_8\right)^{\alpha+1}\left(\left\|\phi\right\|_{H^s}+\left\|\phi\right\|_{H^s}^{\alpha+1}\right)^\alpha\le1,\qquad C_9\left(2C_8\left(\left\|\phi\right\|_{H^s}+\left\|\phi\right\|_{H^s}^{\alpha+1}\right)\right)^\alpha\le \frac{1}{2},
\end{equation}
where $C_8,C_9$ are the constants in (\ref{11431}) and (\ref{11432}), respectively. Set $M=2C_8\left(\left\|\phi\right\|_{H^s}\right.$ $ \left.+\left\|\phi\right\|_{H^s}^{\alpha+1}\right)$ and consider the metric space
\begin{equation}
	Z_{M}=\left\{u \in L^\infty H^s\cap  B^{s/4}_{2,2}L^r\cap B^{(s-2)/4}_{2,2}B^2_{r,2}:
	\left\|u\right\|_{L^\infty H^s\cap  B^{s/4}_{2,2}L^r\cap B^{(s-2)/4}_{2,2}B^{2}_{r,2}}\le M\right\}.\notag
\end{equation}
It follows that $Z_M$ is a complete metric space when equipped with the distance
\begin{equation}\label{d3}
	d(u,v)=\left\|u-v\right\|_{L^\infty L^2\cap L^2L^r}.
\end{equation}
Next, we show that the map $S$, defined in (\ref{Su}), is a contraction on the space $Z_M$.

We first estimate $\left\|Su\right\|_{L^\infty H^s\cap B^{s/4}_{2,2}L^r}$.  Note that  (\ref{1415}), (\ref{1413}) and (\ref{1198}) still hold in the  case $6\le s<8$, we have
\begin{equation}	\label{274}
	\left\|Su\right\|_{L^\infty H^s}
	\lesssim \left\|u\right\|_{L^\infty H^s}^{\alpha+1} +\left\|Su\right\|_{L^\infty L^2}+\left\|\partial_t  \left(Su\right)\right\|_{L^\infty H^{s-4}},
\end{equation}
so that
\begin{equation}	\label{275}
	\left\|Su\right\|_{L^\infty H^s\cap B^{s/4}_{2,2}L^r} \lesssim  \left\|u\right\|_{L^\infty H^s}^{\alpha+1}
	+\left\|Su\right\|_{L^\infty L^2\cap L^2L^r}
	+\left\|\partial_t  \left(Su\right)\right\|_{L^\infty H^{s-4}\cap B^{\left(s-4\right)/4}_{2,2}L^r}.
\end{equation}
From (\ref{SNLS}) and Strichartz's estimate (\ref{SZ}),  we have
\begin{equation}\label{1286}
	\left\|Su\right\|_{L^\infty L^2\cap L^2L^r} \lesssim  \left\|\phi\right\|_{L^2}+\left\|f(u)\right\|_{L^2L^{r'}}
	\lesssim \left\|\phi\right\|_{H^s}+\left\|u\right\|_{L^\infty H^s}^\alpha \left\|u\right\|_{L^2L^r},
\end{equation}
where we used (\ref{fu}), H\"older's inequality and Sobolev's embedding $H^s\left(\R^N\right) \hookrightarrow L^{\frac{2N}{N-2s}}\left(\R^N\right)$ in the second inequality.
Next, we claim that
\begin{eqnarray}\label{276}
	&&\left\|\partial_t  \left(Su\right)\right\|_{L^\infty H^{s-4}\cap B^{\left(s-4\right)/4}_{2,2}L^r}\notag\\
	&\lesssim & \left\|\phi\right\|_{H^s}+\left\|\phi\right\|_{H^s}^{\alpha+1}+\left\|u\right\|_{L^\infty H^S}^\alpha \left(\left\|u\right\|_{B^{s/4}_{2,2}L^r}+\left\|u\right\|_{B^{\left(s-2\right)/4}_{2,2}B^{2}_{r,2}}\right).
\end{eqnarray}
In fact, from the equation (\ref{tNLS}), the inequalities (\ref{i2}) and (\ref{1198}), we have
\begin{eqnarray}\label{11421}
	&&\left\|\partial_t  \left(Su\right)\right\|_{L^\infty H^{s-4}\cap B^{\left(s-4\right)/4}_{2,2}L^r}\notag\\
	&\lesssim &\left\|\phi\right\|_{H^{s}}+\left\|f(\phi)\right\|_{H^{s-4}}+\left\|\partial_tf(u)\right\|_{B^{(s-4)/4}_{2,2}L^{r'}}+\left\|\partial_tf(u)\right\|_{l^2L^{4/(8-s)}L^2}\notag\\
	&\lesssim &\left\|\phi\right\|_{H^s}+\left\|\phi\right\|_{H^s}^{\alpha+1}+\left\|f(u)\right\|_{B^{s/4}_{2,2}L^{r'}}+\left\|\partial_tf(u)\right\|_{l^2L^{4/(8-s)}L^2}.
\end{eqnarray}
From Lemma \ref{l2} and Sobolev's embedding  $H^s \left(\R^N\right)\hookrightarrow L^{\frac{2N}{N-2s}}\left(\R^N\right)$, we have
\begin{equation}\label{11424}
	\left\|f(u)\right\|_{B^{s/4}_{2,2}L^{r'}} \lesssim  \left\|u\right\| _{L^\infty L^{\frac{2N}{N-2s}}}^\alpha\left\|u\right\|_{B^{s/4}_{2,2}L^r}
	\lesssim  \left\|u\right\|_{L^\infty H^s}^\alpha\left\|u\right\|_{B^{s/4}_{2,2}L^r}.
\end{equation}
It remains to estimate $\left\|\partial_tf(u)\right\|_{l^2L^{4/(8-s)}L^2}$.
Note that $\frac{4}{8-s}\ge 2$, so that
$B^{(s-6)/4}_{2,2}L^2\hookrightarrow B^{(s-6)/4}_{2,4/(8-s)}L^2\hookrightarrow L^{4/(8-s)}L^2$, which implies
\begin{equation}\label{11425}
	\left\|\partial_tf(u)\right\|_{l^2L^{4/(8-s)}L^2} \lesssim \left\|\partial_tf(u)\right\|_{l^2B^{(s-6)/4}_{2,2}L^2}\lesssim \left\|f(u)\right\|_{B^{(s-2)/4}_{2,2}L^2}.
\end{equation}
Moreover, it follows from  Lemma \ref{l2} and Sobolev's embedding $H^s\left(\R^N\right)\hookrightarrow L^{\frac{2N}{N-2s}}\left(\R^N\right)$, $B^2_{r,2}\left(\R^N\right)\hookrightarrow L^{\frac{2N}{N-8}}$ that
\begin{equation}\label{11426}
	\left\|f(u)\right\|_{B^{(s-2)/4}_{2,2}L^2} \lesssim  \left\|u\right\|^\alpha_{L^\infty L^{\frac{2N}{N-2s}}}\left\|u\right\|_{B^{(s-2)/4}_{2,2}L^{\frac{2N}{N-8}}}
	\lesssim  \left\|u\right\|^\alpha_{L^\infty H^s}\left\|u\right\|_{B^{(s-2)/4}_{2,2}B^2_{r,2}}.
\end{equation}
The inequality (\ref{276}) is now an immediate consequence of (\ref{11421})--(\ref{11426}). \\
Estimates  (\ref{275}), (\ref{1286}) and (\ref{276}) imply that, for any $u\in Z_M$,
\begin{equation}\label{11428}
	\left\|Su\right\|_{L^\infty H^s\cap B^{s/4}_{2,2}L^r}\lesssim \left\|\phi\right\|_{H^s}+\left\|\phi\right\|_{H^s}^{\alpha+1}+M^{\alpha+1}.
\end{equation}

We now estimate $\left\|Su\right\|_{B^{(s-2)/4}_{2,2}B^2_{r,2}}$. When $s=6$, we deduce from the inequality (\ref{i2}) and the equation (\ref{tNLS}) that
\begin{eqnarray}\label{261}
	&&\left\|\partial_t  \left(Su\right)\right\|_{L^2 B^{2}_{r,2}}\notag\\
	&\lesssim& \left\|\phi\right\|_{H^6}+\left\|f(\phi)\right\|_{H^2}+\left\|\partial_t  f(u)\right\|_{B^{1/2}_{2,2}L^{r'}}+\left\|\partial_t  f(u)\right\|_{l^2L^2L^2}\notag\\
	&\lesssim &\left\|\phi\right\|_{H^6}+\left\|\phi\right\|_{H^6}^{\alpha+1}+\left\|f(u)\right\|_{B^{3/2}_{2,2}L^{r'}}+\left\|\partial_t  f(u)\right\|_{L^2L^2},
\end{eqnarray}
where we used (\ref{1198}) and the embedding $L^2L^2 \hookrightarrow l^2L^2L^2$ in the last inequality.
When $6<s<8$, we deduce from  the equation (\ref{tNLS}), the inequality (\ref{i3}) ($\sigma=2,\theta=(s-4)/4$) and (\ref{1198}) that
\begin{eqnarray}\label{11429}
	&&\left\|\partial_t\left(Su\right)\right\|_{B^{(s-4)/4-2/4}_{2,2}B^2_{r,2}}\notag\\
	&\lesssim &\left\|\phi\right\|_{H^{s}}+\left\|f(\phi)\right\|_{H^{s-4}}+\left\|\partial_tf(u)\right\|_{B^{(s-4)/4}_{2,2}L^{r'}}+\left\|\partial_tf(u)\right\|_{l^2L^{4/(8-s)}L^2}\notag\\
	&\lesssim & \left\|\phi\right\|_{H^s}+\left\|\phi\right\|_{H^s}^{\alpha+1}+\left\|f(u)\right\|_{B^{s/4}_{2,2}L^{r'}}+\left\|\partial_tf(u)\right\|_{l^2L^{4/(8-s)}L^2}.
\end{eqnarray}
Estimates (\ref{261}),  (\ref{11429}), (\ref{11424}), (\ref{11425}) and (\ref{11426}) imply that, for any $u\in Z_{M}$,
\begin{equation}	\label{262}
	\left\|\partial_t\left(Su\right)\right\|_{B^{(s-4)/4-2/4}_{2,2}B^2_{r,2}}\lesssim \left\|\phi\right\|_{H^s}+\left\|\phi\right\|_{H^s}^{\alpha+1}+M^{\alpha+1}, \qquad 6\le s<8.
\end{equation}
On the other hand, it follows from the inequality (\ref{i2}) and the embedding $L^2L^2 \hookrightarrow l^2L^2L^2$ that
\begin{eqnarray}
	\label{277}
	\left\|Su\right\|_{L^2B^{2}_{r,2}}&\lesssim& \left\|\phi\right\|_{H^2}+\left\|f(u)\right\|_{B^{1/2}_{2,2}L^{r'}}+\left\|f(u)\right\|_{L^2L^2}\notag\\
	&\lesssim & \left\|\phi\right\|_{H^2}+\left\|u\right\|_{L^\infty H^s}^{\alpha} \left(\left\|u\right\|_{B^{s/4}_{2,2}L^r}+\left\|u\right\|_{B^{\left(s-2\right)/4}_{2,2}B^{2}_{r,2}}\right),
\end{eqnarray}
where we used (\ref{11424}) and (\ref{11426}) in the second inequality.
It now follows from (\ref{11428}), (\ref{262}), (\ref{277}) and (\ref{253}) that, for any $u\in Z_M$,
\begin{equation}\label{11431}
	\left\|Su\right\|_{L^\infty H^s\cap B^{s/4}_{2,2}L^r\cap B^{(s-2)/4}_{2,2}B^2_{r,2}}\le C_8\left(\left\|\phi\right\|_{H^s}+\left\|\phi\right\|_{H^s}^{\alpha+1}\right)+C_8M^{\alpha+1}\le M.
\end{equation}

Our next aim is the desired Lipschitz property of $S$ with respect to the metric $d$ defined in (\ref{d2}). For any $u,v\in Z_M$, we deduce from Strichartz's estimate (\ref{SZ}), the inequality (\ref{fu}), (\ref{253}), H\"older's inequality and Sobolev's embedding $H^s\left(\R^N\right)\hookrightarrow L^{\frac{2N}{N-2s}}\left(\R^N\right)$ that
\begin{eqnarray}\label{11432}
	d(Su,Sv)&\lesssim &\left\|\left(\left|u\right|^{\alpha}+\left|v\right|^{\alpha}\right)\left(u-v\right)\right\|_{L^2L^{r'}}\notag\\
	&\lesssim &\left(\left\|u\right\|^\alpha_{L^\infty H^s}+\left\|v\right\|^\alpha_{L^\infty H^s}\right)\left\|u-v\right\|_{L^2L^r}\notag\\
	&\le &C_9M^\alpha d(u,v)\le\frac{1}{2}d(u,v).
\end{eqnarray}

Therefore, we deduce from Banach's fixed point argument that the Cauchy problem (\ref{NLS}) admits a  unique global solution $u\in C\left(\R,H^s\right)\cap B^{s/4}_{2,2}L^r\cap B^{(s-2)/4}_{2,2}B^2_{r,2}$, where the continuity of $u$ in time follows from the same argument used  in the case $4<s<6$.





\section*{Funding:}  {This work is partially supported by the National Natural Science Foundation of China 11771389,  11931010 and 11621101.}

\end{document}